\def\beq*{\begin{eqnarray*}}
\def\eeq*{\end{eqnarray*}}
\def\be{\begin{eqnarray}}
\def\ee{\end{eqnarray}}
\def\b{\beta}
\def\hb{\widehat{\beta}}
\def\a{\alpha}
\def\ha{\widehat{\alpha}}
\def\hk{\widehat{k}}
\def\haa{\widehat{a}}
\def\hbb{\widehat{b}}
\def\mcS{\mathcal{S}}
\def\mbP{\mathbb{P}}
\newtheorem{theorem}{Theorem}
\newtheorem{lemma}[theorem]{Lemma}
\newtheorem{corollary}[theorem]{Corollary}
\newtheorem{definition}{Definition}
\newtheorem{case}{Case}
\newtheorem{subcase}{Case}
\numberwithin{subcase}{case}
\author[Amanda Lohss]{Amanda Lohss}
\address{Department of Mathematics, Drexel University, Philadelphia, 
PA  19104, USA} 
\email{agp47@drexel.edu}
\title[Asymptotic  distribution of
 symbols on diagonals]{The Asymptotic  distribution of
 symbols on diagonals
\\ 
of random weighted staircase tableaux
}
\begin{document}
\maketitle
\begin{abstract}
Staircase tableaux are combinatorial objects that were first introduced due to a connection with the asymmetric simple exclusion process (ASEP) and Askey-Wilson polynomials. Since their introduction, staircase tableaux have been the object of study in many recent papers. Relevant to this paper, the distribution of parameters on the first diagonal was proven to be asymptotically normal. In that same paper, a conjecture was made that the other diagonals would be asymptotically Poisson. Since then, only the second and the third diagonal were proven to follow the conjecture. This paper builds upon those results to prove the conjecture for fixed $k$. In particular, we prove that the distribution of the number of $\a$'s ($\b$'s) on the $k$th diagonal, $k>1$, is asymptotically Poisson with parameter $1/2$. In addition, we prove that symbols on the $k$th diagonal are asymptotically independent and thus, collectively follow the Poisson distribution with parameter $1$.
\end{abstract}

%%%%%%%%%%%%%%%%%%%%%%%%%%%%%%%%%%%%%%%%%%%%%%%%%%%%%%%%%%%
%\section{The distribution of parameters on the k$^{th}$ main diagonal}
%
%
%For the considerations of the $k^{th}$ main diagonal, we write $x_{i,j}$ to indicate that the $(i,j)$ box of a tableau is non--empty. The next lemma shows that $D$-connected symbols are not likely.
%
%\mandy{Maybe not needed?}
%\begin{lemma}\label{ndsk}
%Consider arbitrary $S\in\overline\mcS_{n}$ such that  $\a_{j_1}^{k}, \a_{j_2}^k,\dots,\a_{j_r}^k$. If there exists a $D$-connected symbol, say $x_{i,j}\in \widehat S$, then $\mbP_{n,\a,\b}(S)=O\left(\frac{1}{(n+a+b)^{m+1}}\right)$.
%\end{lemma}
%\begin{proof}  
%We are considering the tableaux where there is at least one $D$-connected symbol. I.e. $h>0$. Therefore, by Equation \ref{breakdown},
%\beq*
% \mbP_{\hk,\ha,\hb}(\widehat S)&=&\sum_{h=1}^{\hk-m-1}C_{\hk,h} \frac {b^m}{(n+a+b-1)_{h+2m}}\mbP_{n-h-1}(\a_{j_2}^k,\dots,\a_{j_r}^k)\\
% &=&\sum_{h=1}^{\hk-m-1}O\left(\frac{1}{(n+a+b)^{h+m}}\right)\\
% &=&O\left(\frac{1}{(n+a+b)^{m+1}}\right).
% \eeq*
%\end{proof}
%%%%%%%%%%%%%%%%%%%%%%%%%%%%%%%%%%%%%%%%%%%%%%%%%%
\section{Introduction}\label{intro}
This paper is on staircase tableaux, combinatorical objects which were introduced in \cite{CW1} and \cite{CW2}. Staircase tableaux were initially introduced to provide a combinatorical formula for the stationary distribution of the asymmetric simple exclusion process (ASEP), (see \cite[Theorem~2]{CW1} and \cite[Theorem 3.5]{CW2}). The ASEP is an important model from statistical mechanics that has been used to model protein synthesis \cite{GMP}, traffic flow \cite{SW2}, and much more (See, for example \cite{Bu}, \cite{DLS}). We refer to \cite[Section 2-3]{CW2} for more details on the ASEP and its connection to staircase tableaux. \\
Staircase tableaux also have interesting connections in combinatorics and analysis. In the original papers (\cite{CW1} and \cite{CW2}), staircase tableaux were used to compute the moments of Askey-Wilson polynomials (\cite[Theorem 3]{CW1}), an important family of orthogonal polynomials. In addition, staircase tableaux are in bijection with permutation tableaux, alternative tableaux, and tree-like tableaux (see \cite[Table~1]{HJ} for more details).\\
These connections have been the motivation for many recent papers to focus specifically on staircase tableaux (\cite{CD-H}, \cite{CSSW}, \cite{D-HH}, \cite{HJ}, \cite{HL}, and \cite{HP}). Relevant to this paper, the distribution of parameters along the first diagonal of staircase tableaux was proven to be asymptotically normal in \cite{HJ}. In addition, the expected number of parameters on any diagonal was computed which prompted a conjecture that the distribution of the $k$th diagonal is asymptotically Poisson as $n,k\rightarrow\infty$. This conjecture was partially proven in \cite{HL}. More specifically, parameters on the second and third diagonal were proven to be asymptotically Poisson. This paper builds upon those results to prove the conjecture for fixed $k$. In particular, we prove that the distribution of the number of $\a$'s ($\b$'s) on the $k$th diagonal is asymptotically Poisson with parameter $1/2$ (Theorem~\ref{alpha_on_kth} (Corollary~\ref{beta_on_kth})). In addition, we prove that symbols on the $k$th diagonal are asymptotically independent and thus, collectively follow the Poisson distribution with parameter $1$ (Theorem~\ref{symbol_on_kth}). \\

\section{Definitions, Notation, and Previous Results}

\begin{definition}\label{def} Staircase tableaux of size $n$ are defined to be a Young diagram of shape $n, n-1,\dots,1$ whose boxes are either empty or filled with an $\a, \b, \gamma$ or $\delta$. The filling rules are as follows:
\begin{itemize}
\item Each box along the first diagonal must contain a symbol.
\item All boxes in the same column and north of an $\a$ or $\gamma$ must be empty.
\item All boxes in the same row and west of a $\b$ or $\delta$ must be empty.
\end{itemize}
\end{definition}
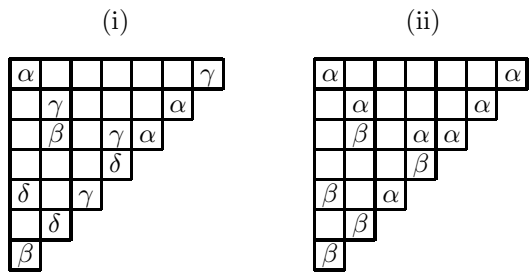
\begin{figure}[htbp] 
\setlength{\unitlength}{0.4cm}%\setlength{linewidth=0.4mm}
\begin{center}
\begin{picture}
(0,0)(8,8)\thicklines

\put(3,8){(i)} \put(13,8){(ii)}

%first

%vertical lines
\put(0,0){\line(0,1){7}}
\put(1,0){\line(0,1){7}}
\put(2,1){\line(0,1){6}}
\put(3,2){\line(0,1){5}}
\put(4,3){\line(0,1){4}}
\put(5,4){\line(0,1){3}}
\put(6,5){\line(0,1){2}}
\put(7,6){\line(0,1){1}}

%horizontal lines
\put(0,7){\line(1,0){7}}
\put(0,6){\line(1,0){7}}
\put(0,5){\line(1,0){6}}
\put(0,4){\line(1,0){5}}
\put(0,3){\line(1,0){4}}
\put(0,2){\line(1,0){3}}
\put(0,1){\line(1,0){2}}
\put(0,0){\line(1,0){1}}

%greek symbols
\put(0.25, 6.25){$\a$}
\put(0.25, 2.25){$\delta$}
\put(1.25, 5.25){$\gamma$}
\put(1.25, 4.25){$\b$}
\put(3.25, 4.25){$\gamma$}
\put(6.25, 6.25){$\gamma$}
\put(5.25, 5.25){$\alpha$}
\put(4.25, 4.25){$\alpha$}
\put(3.25, 3.25){$\delta$}
\put(2.25, 2.25){$\gamma$}
\put(1.25, 1.25){$\delta$}
\put(0.25, 0.25){$\b$}

%second
%(C,R)
%C: 10-17, R: 0-6
%vertical lines
\put(10,0){\line(0,1){7}}
\put(11,0){\line(0,1){7}}
\put(12,1){\line(0,1){6}}
\put(13,2){\line(0,1){5}}
\put(14,3){\line(0,1){4}}
\put(15,4){\line(0,1){3}}
\put(16,5){\line(0,1){2}}
\put(17,6){\line(0,1){1}}

%horizontal lines
\put(10,7){\line(1,0){7}}
\put(10,6){\line(1,0){7}}
\put(10,5){\line(1,0){6}}
\put(10,4){\line(1,0){5}}
\put(10,3){\line(1,0){4}}
\put(10,2){\line(1,0){3}}
\put(10,1){\line(1,0){2}}
\put(10,0){\line(1,0){1}}

%greek symbols
\put(10.25, 6.25){$\a$}
\put(10.25, 2.25){$\b$}
\put(11.25, 5.25){$\a$}
\put(11.25, 4.25){$\b$}
\put(13.25, 4.25){$\a$}
\put(16.25, 6.25){$\a$}
\put(15.25, 5.25){$\a$}
\put(14.25, 4.25){$\a$}
\put(13.25, 3.25){$\b$}
\put(12.25, 2.25){$\a$}
\put(11.25, 1.25){$\b$}
\put(10.25, 0.25){$\b$}

\end{picture}

\vspace{3cm}
\caption{A staircase tableau of size $7$ with weight $\a^{3}
  \b^{2} \delta^{3} \gamma^{4}$. (ii) The $\a/\b$-staircase tableau version of (i) obtained by replacing $\gamma$'s with $\a$'s and $\delta$'s with $\b$'s.}
\end{center}

\end{figure}
We enumerate the rows and columns of staircase tableaux beginning in the NW-corner. Let $(i,j)$ refer to the box in the $i$th row and the $j$th column. We are especially interested in the diagonals of staircase tableaux and more formally define the $k$th diagonal to be the collection of boxes $(i,j)$ such that $i+j=n-k+2$. It is important to note that the $k$th diagonal in this paper is the $(n-k+1)$th diagonal in \cite{HJ}. It is more convenient to enumerate the diagonals in this way as we will be considering $k$th diagonal where $k$ is finite instead of the $(n-k+1)$th diagonal where $n-k+1$ is finite. We also find it convenient to enumerate diagonal boxes by their column position i.e. we write $\{(n-k-l+2,l):\ 1\le l\le n-k+1\}$ for the boxes on the $k^{th}$ diagonal. Throughout the paper, $x_j^k$ will denote the event (or its indicator) that there is a symbol $x$ on the $k^{th}$ diagonal in the $j^{th}$ column of a staircase tableau; that is, there is an $x$ in the $(n-k-j_1+2,j_1)$ box of the tableau.\\
Without loss of generality, we can replace all $\gamma$'s with $\a$'s and $\delta$'s with $\b$'s to obtain $\a/\b$-staircase tableaux. Let $S_n$ denote the set of all $\a/\b$-staircase tableaux. The weight generating function for staircase tableaux is well-known and the following version for $S_n$ is from \cite{HJ},
\[
Z_{n}(\a, \b):=\sum_{S \in \mcS_{n}} wt(S)= \a^{n}\b^{n}(a + b+n-1)_{n}
\]
where $wt(S)$ is the product of all symbols in $S$, $a:= \a^{-1}$, $b:= \b^{-1}$  and $(x)_n=x(x-1)\dots(x-(n-1))$ is the falling factorial of $x$.\\
It is important to note that there is a simple involution between staircase tableaux of a particular size. If we interchange the rows and columns and $\a$'s and $\b$'s, we obtain a staircase tableaux of the same size. We will use this involution to obtain the distribution of $\b$'s from the distribution of $\a$'s.\\
We want to consider random weighted $\a/\b$-staircase tableaux as was done in \cite{HJ}, 
\begin{definition}\label{def1}
For all $n \geq 1$, $\a, \b \in [0, \infty)$ with $(\a,
\b) \neq (0,0)$, define a random $S\in \mcS_n$ to be the $\a/\b$-staircase tableaux chosen according to the following distribution,
\[
\mathbb{P}_{n,\a,\b} (S) := \frac{wt(S)}{Z_{n}(\a,\b)},\quad S \in \mcS.
\]
\end{definition}
We allow $\a\rightarrow\infty$ ($\b\rightarrow\infty$) by fixing $\b$ ($\a$) and taking the limit, or we also allow $\a=\b\rightarrow\infty$. Even though we are using a probability measure on $S_n$, all of our results can be extended to tableaux as defined in Definition~\ref{def}, i.e. staircase tableaux with all four symbols. This can be done by randomly replacing each $\a$ with $\gamma$ with probability $\frac{\gamma}{\a + \gamma}$ and each $\b$ with $\delta$ with probability $\frac{\delta}{\b + \delta}$ independently for each occurrence.\\
Using the probability measure from Definition~\ref{def1}, the distribution of boxes was given in \cite{HJ}. For boxes on the first diagonal (\cite[Theorem~7.1]{HJ}),
\be
\mathbb{P}_{n, \a, \b}(\a^1_j) = \frac{k+j+b-2}{n + a + b - 1},\quad \mathbb{P}_{n, \a, \b}(\b^1_j) = \frac{n-k-j+a+1}{n + a + b - 1}\label{subt}.
\ee
For boxes on the $k$th diagonal, $k>1$ (\cite[Theorem~7.2]{HJ}),
\be
\mathbb{P}_{n, \a, \b}(\a^k_j) = \frac{b + j - 1}{(n-k + a + b + 1)_2}, 
\qquad\mathbb{P}_{n, \a, \b}(\b^k_j)  = \frac{n-k-j+a+1}{(n-k + a + b + 1)_2}.
\label{boxk}
\ee
Of course boxes on the $k$th diagonal may be empty if $k>1$, and this probability is the remainder of the above.\\
One more useful result from \cite{HJ} relates to subtableaux. For arbitrary $S\in\mcS_n$, let $S[i,j]$ be the subtableau of $S$ that begins in box $(i,j)$, i.e. the first $i-1$ rows and $j-1$ columns of $S$ are deleted. Then, as proven in \cite[Theorem~6.1]{HJ},
\be
\mbP_{n,\a,\b}(S)=\mbP_{n-i-j+2, \ha, \hb}(S[i,j])\label{subtab}
\ee
where $\haa=a+i-1$ and $\hbb=b+j-1$.\\
The distributions of the second and third diagonals were shown to converge to a Poisson random variable in \cite{HL} by the method of factorial moments. We will also use this method for the $k$th diagonal in general, and the following two lemmas proven in \cite{HL} will be useful (\cite[Lemma 1]{HL} and \cite[Lemma 5]{HL}, respectively).
\begin{lemma}\label{fac_mom} Let $Y=\sum_{j=1}^mI_j$, where $(I_j)$ are indicator random variables.  Then, for $r\ge1$, 
\[\mathbb{E}(Y)_{r} =
r! \left( \sum_{1 \leq j_{1} < ... < j_{r} \leq m } \mathbb{P}(I_{j_{1}} \cap \ldots \cap I_{j_{r}}) \right), 
\]
where $\mathbb E(X)_r=\mathbb EX(X-1)\dots(X-(r-1))$ is the $r^{th}$  factorial moment.
\end{lemma}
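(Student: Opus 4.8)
\emph{Proof proposal.} This is the classical identity expressing the factorial moments of a sum of indicators in terms of intersection probabilities, so the plan is to reduce everything to a single pointwise (sample-path by sample-path) combinatorial identity and then take expectations. Fix a sample point and let $A=\{j:\ I_j=1\}$, so that $Y=|A|$. The first step is to establish, as an identity of random variables,
\[
(Y)_r=\sum_{\substack{(j_1,\dots,j_r)\\ \text{pairwise distinct}}} I_{j_1}I_{j_2}\cdots I_{j_r},
\]
where the sum runs over ordered $r$-tuples of pairwise distinct indices in $\{1,\dots,m\}$. Both sides count the same quantity: the right-hand side equals the number of ordered $r$-tuples of pairwise distinct elements all lying in $A$, while the left-hand side $Y(Y-1)\cdots(Y-r+1)=(|A|)_r$ is, by definition of the falling factorial, exactly that number of ordered selections — and both sides are $0$ when $|A|<r$, since then one falling-factorial factor vanishes and no such tuple exists. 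If one prefers to avoid the counting heuristic, the same identity follows by an easy induction on $r$ using $(Y)_{r+1}=(Y)_r\,(Y-r)$ and the observation that, on the event $I_{j_1}=\cdots=I_{j_r}=1$, the quantity $Y-r$ equals the sum of the remaining indicators $\sum_{j\notin\{j_1,\dots,j_r\}}I_j$; I would present whichever of the two versions reads more cleanly in the final draft.

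Granting the identity, the rest is bookkeeping. Taking expectations and using linearity,
\[
\mathbb{E}(Y)_r=\sum_{\substack{(j_1,\dots,j_r)\\ \text{pairwise distinct}}}\mathbb{E}\bigl(I_{j_1}\cdots I_{j_r}\bigr)
=\sum_{\substack{(j_1,\dots,j_r)\\ \text{pairwise distinct}}}\mathbb{P}\bigl(I_{j_1}\cap\cdots\cap I_{j_r}\bigr),
\]
where the last equality holds because a product of $\{0,1\}$-valued indicators is itself the indicator of the intersection of the corresponding events. Finally, the $r!$ orderings of any fixed set $\{j_1<\cdots<j_r\}$ contribute the same probability $\mathbb{P}(I_{j_1}\cap\cdots\cap I_{j_r})$, so collecting ordered tuples into unordered subsets produces the factor $r!$ and yields
\[
\mathbb{E}(Y)_r=r!\sum_{1\le j_1<\cdots<j_r\le m}\mathbb{P}\bigl(I_{j_1}\cap\cdots\cap I_{j_r}\bigr),
\]
as claimed.

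There is essentially no hard part here: the only point requiring a little care is the boundary behaviour when $Y<r$ on a given sample path, which must be checked so that the pointwise identity is genuinely valid on all of the underlying probability space and not merely on $\{Y\ge r\}$; once that is observed, taking expectations is routine and every sum is finite since $m$ is finite. The real work of the paper comes afterward, when this lemma is applied with $m=n-k+1$ and $I_j$ the events $\a^k_j$: it converts the task of identifying the limit law of the number of $\a$'s on the $k$th diagonal into the analytic problem of estimating the joint probabilities $\mathbb{P}(\a^k_{j_1}\cap\cdots\cap\a^k_{j_r})$, which is where the subtableau decomposition~\eqref{subtab} and the explicit box probabilities~\eqref{boxk} will be needed.
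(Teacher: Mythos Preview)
Your proof is correct and is the standard argument for this classical identity. Note, however, that the present paper does not actually prove this lemma; it simply quotes it from \cite[Lemma~1]{HL}, so there is no ``paper's own proof'' here to compare against beyond that citation. Your pointwise identity $(Y)_r=\sum_{\text{distinct }(j_1,\dots,j_r)}I_{j_1}\cdots I_{j_r}$ followed by taking expectations and grouping ordered tuples into unordered sets is exactly the textbook route, and your remark on the boundary case $Y<r$ is the only point that ever needs checking.
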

\begin{lemma}
\label{LA}
Let 
\[J_{r,m}:=\{ 1 \leq j_{1} < ... < j_{r} \leq m:\ j_{k} \leq j_{k + 1}
- 2, \hspace{2mm} \forall k = 1, 2, ..., r - 1\}.\] 
Then 
\[\sum_{J_{r, m}} \left(\prod_{k=1}^{r} j_{r-k+1}\right)
=\frac{(m+1)_{2r}}{2^{r}r!}.\]
\end{lemma}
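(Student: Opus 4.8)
The plan is to prove the identity by induction on $r$, peeling off the largest selected index at each step. Observe first that $\prod_{k=1}^{r} j_{r-k+1}=j_{1}j_{2}\cdots j_{r}$, so the quantity to be evaluated is just $\sum_{J_{r,m}} j_{1}j_{2}\cdots j_{r}$, the sum over admissible tuples of the product of their entries. For the base case $r=1$ there is no gap constraint, $J_{1,m}=\{1\le j_{1}\le m\}$, and $\sum_{j_{1}=1}^{m} j_{1}=\binom{m+1}{2}=\frac{(m+1)_{2}}{2}$, matching the claim.

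For the inductive step, fix $r\ge 2$, assume the formula for $r-1$ and all $m$, and split the tuples of $J_{r,m}$ according to the value $j_{r}=j$ of their largest entry. The remaining entries $(j_{1},\dots,j_{r-1})$ then range over precisely the strictly increasing, gap-at-least-$2$ tuples bounded by $j-2$, i.e.\ over $J_{r-1,\,j-2}$. Therefore
\begin{align*}
\sum_{J_{r,m}} j_{1}\cdots j_{r}
&=\sum_{j} j\left(\sum_{J_{r-1,\,j-2}} j_{1}\cdots j_{r-1}\right)\\
&=\frac{1}{2^{r-1}(r-1)!}\sum_{j=1}^{m} j\,(j-1)_{2(r-1)},
\end{align*}
the inductive hypothesis having been applied with $j-2$ in place of $m$. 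One can let $j$ run over all of $\{1,\dots,m\}$ here: when $j\le 2r-2$ the set $J_{r-1,\,j-2}$ is empty, and correspondingly the falling factorial $(j-1)_{2(r-1)}$ vanishes, so those terms contribute nothing.

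The remaining one-dimensional sum is elementary. The key observation is that $j\,(j-1)_{2(r-1)}=j(j-1)(j-2)\cdots(j-2r+2)=(j)_{2r-1}$, a product of $2r-1$ consecutive integers, so the sum becomes $\sum_{j=1}^{m}(j)_{2r-1}$. Writing $(j)_{2r-1}=(2r-1)!\binom{j}{2r-1}$ and invoking the hockey-stick identity $\sum_{j=0}^{m}\binom{j}{2r-1}=\binom{m+1}{2r}$ gives $\sum_{j=1}^{m}(j)_{2r-1}=\frac{(m+1)_{2r}}{2r}$. Substituting back,
\[
\sum_{J_{r,m}} j_{1}\cdots j_{r}=\frac{1}{2^{r-1}(r-1)!}\cdot\frac{(m+1)_{2r}}{2r}=\frac{(m+1)_{2r}}{2^{r}r!},
\]
which closes the induction.

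I do not anticipate a genuine obstacle: the argument is a routine induction, and the only places demanding a little care are the bookkeeping of the summation range in the inductive step (resolved by the automatic vanishing of the falling factorials) and the algebraic identity $j\,(j-1)_{2(r-1)}=(j)_{2r-1}$, after which the hockey-stick summation finishes the job. An alternative, more combinatorial route is available as well: since $\frac{(m+1)_{2r}}{2^{r}r!}=\binom{m+1}{2r}(2r-1)!!$, the right-hand side enumerates pairs consisting of a $2r$-subset of $\{1,\dots,m+1\}$ together with a perfect matching on it, and one can set up a weight-preserving bijection with the configurations counted on the left; but the induction above is shorter and is the route I would take.
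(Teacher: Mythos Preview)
Your induction argument is correct: the base case, the splitting by the largest index $j_r=j$, the application of the inductive hypothesis with $m\mapsto j-2$, the identity $j\,(j-1)_{2(r-1)}=(j)_{2r-1}$, and the hockey-stick summation all check out. The remark that the falling factorial vanishes for $j\le 2r-2$ correctly justifies letting $j$ run over the full range $1\le j\le m$.

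As for comparison: the present paper does not actually prove this lemma; it merely quotes it from \cite[Lemma~5]{HL}. So there is no in-paper argument to compare against. Your self-contained proof is therefore a genuine addition relative to this paper, and the inductive route you chose is the standard and natural one.
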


\section{Preliminaries}\label{prelim}
We are interested in parameters along the $k$th diagonal of staircase tableaux and in order to use Lemma~\ref{fac_mom}, we will need to compute asymptotically 
\[\mbP(x_{j_1}^{k}, x_{j_2}^k,\dots, x_{j_r}^k), \hspace{2mm} 1\le j_1<\dots<j_r\le n-k+1\] 
in the case where all of the $x$'s are $\a$'s (the case where all of the $x$'s are $\b$'s will follow by symmetry) and the case where any particular $x$ is allowed to be an $\a$ or a $\b$.

	To do this we consider two cases: $m=1$ and $m>1$ where
\[m:=\min\{l\ge1:\ j_l\le  j_{l+1}-k\}, \]
 i.e. all symbols on the $k^{th}$ diagonal before the $m^{th}$ are within $k$ columns of the subsequent symbol, but the $m^{th}$ is not. Consider arbitrary $S\in\mathcal{S}_n$ such that $x_{j_1}^{k}, x_{j_2}^k,\dots,x_{j_r}^k$. 
 By removing the first $j_1-1$ columns (see Equation~\eqref{subt}) we may and do assume that $j_1=1$ but it is important to note that $b$ now depends on $n$. 
  
  Consider its subtableau $\widehat S:=S[n-k-j_m+1,1]$, whose size is 
\[ \hk=n+1-(n-k-j_m+2)-(j_1-1)=k+j_m-1.\]

It has $m$ symbols on its $k^{th}$ diagonal and each such symbol forces two symbols on the first diagonal, namely an $\a$ in the same row and a $\b$ in the same column. In other words $x_{j_l}^k$ implies $\a_{k+j_l-1}^1$ and $\b_{j_l}^1$ (note that in this notation it is irrelevant if we consider $S$ or its subtableau $\widehat S$). Thus, $\widehat S$ contains at least $3m$ symbols, where at least $m$ of those symbols are $\a$'s and at least $m$ of those symbols are $\b$'s. 
  
Let $D$ be an area of $\widehat S$ bounded by its first diagonal and a broken line going vertically from $\b_1^1$ to $x_1^k$ then horizontally from $x_1^k$ to $0_{j_2}^{k+1-j_2}$ then vertically from $0_{j_2}^{k+1-j_2}$ to $x_{j_2}^k$ etc., until the vertical segment reaches $x_{j_m}^k$ and then the last horizontal segment goes from $x_{j_m}^k$ to $\a_{k+j_m-1}^1$ (see Figure~\ref{Amb} for an example). 

\begin{figure}[htbp] 
\setlength{\unitlength}{0.4cm}%\setlength{linewidth=0.4mm}
\begin{center}
\begin{picture}
(0,0)(5,10)\thicklines

%\put(4,11){(i)} \put(16,11){(ii)}\put(26,11){(iii)}

%1st
%vertical lines
\put(0,0){\line(0,1){10}}
\put(1,0){\line(0,1){10}}
\put(2,1){\line(0,1){9}}
\put(3,2){\line(0,1){8}}
\put(4,3){\line(0,1){7}}
\put(5,4){\line(0,1){6}}
\put(6,5){\line(0,1){5}}
\put(7,6){\line(0,1){4}}
\put(8,7){\line(0,1){3}}
\put(9,8){\line(0,1){2}}
\put(10,9){\line(0,1){1}}

%horizontal lines
\put(0,10){\line(1,0){10}}
\put(0,9){\line(1,0){10}}
\put(0,8){\line(1,0){9}}
\put(0,7){\line(1,0){8}}
\put(0,6){\line(1,0){7}}
\put(0,5){\line(1,0){6}}
\put(0,4){\line(1,0){5}}
\put(0,3){\line(1,0){4}}
\put(0,2){\line(1,0){3}}
\put(0,1){\line(1,0){2}}
\put(0,0){\line(1,0){1}}

%greek symbols
\put(0.25, 0.25){$\b$}
\put(0.25, 5.25){$x$}

\put(2.25, 7.25){$x$}
\put(2.25, 2.25){$\b$}

%\put(1.25,9.25){?}
%\put(3.25,9.25){?}
%\put(1.25, 7.25){?}

\put(4.25,9.25){$x$}
\put(4.25,4.25){$\b$}

\put(5.25, 5.25){$\a$}

\put(7.25,7.25){$\a$}

\put(9.25, 9.25){$\a$}

%boundary lines
%vertical
\put(0.5,0){\line(0,1){5.5}}
\put(2.5,5.5){\line(0,1){2}}
\put(4.5,7.5){\line(0,1){2}}

%horizontal lines
\put(0.5,5.5){\line(1,0){2}}
\put(2.5,7.5){\line(1,0){2}}
\put(4.5,9.5){\line(1,0){5.5}}

\end{picture}

\vspace{5cm}
\caption{A staircase tableau with $m=3$, $k=6$, $\hk=10$. %Ambiguous symbols are represented by ~?. 
\label{Amb}}

\end{center}

\end{figure}
Now for our methods, it is important to keep track of certain symbols in $\widehat S$ beyond those previously mentioned (i.e. beyond $x_{j_1}^{k}, x_{j_2}^k,\dots, x_{j_r}^k$ and the $\a$'s and $\b$'s on the first diagonal forced by those symbols). The following definitions and lemma will allow us to do so. Note that these symbols are precisely the $D$-connected symbols as will be defined shortly. 

\begin{definition}\label{abpath}
For any $x_j^i$, $i\neq1, i\neq k$, define its $\a\b$-path to be the path defined recursively by the following rules:
\begin{enumerate}
\item \label{betabelow} If $x_j^i=\a$ and $\exists l$, $1<l< i$, such that $\b_j^l$ (i.e. there is a $\b$ below in the same column), then connect those symbols. 
%boxes $(n-i-j+2, h)$ and $(n-l-j+2, j)$.
\item \label{alpharight} If $x_j^i=\b$ and $\exists h$, $i<h<i+j-h$, such that $\a_h^{i+j-h}$ (i.e. there is an $\a$ east in the same row), then connect those symbols.
%boxes $(n-i-j+2,j)$ and $(n-i-j+2,h)$.
\item \label{kdiagsymb} If $j\in\{j_1,\dots,j_m\}$ (or $j\in\{k-i-j_1,\dots,k-i-j_m\}$) (i.e. there is a symbol on the $k^{th}$ diagonal in the same column (or the same row)), then connect $x_j^i$ and $x_j^k$ (or $x^k_{k-i-j}$). 

%If $\a_j^k$ and $i+j=k+h$  (i.e. there is an alpha on the $k^{th}$ diagonal in the same row), then connect those boxes $(n-i-j+2, j)$ and $(n-k-h+2,h)$
\end{enumerate}
\end{definition}

\begin{definition}
Define $x_j^i$ to be $D$-connected if its $\a\b$-path ends on $D$ (See Figure~\ref{Dpaths}).
\end{definition}

\begin{lemma}\label{DcProp} Properties of $D$-connected symbols:
\begin{enumerate}
\item \label{DC1} Any symbol in the same row or the same column as a $D$-connected symbol is also $D$-connected.
\item \label{DC2} There are at most $\hk-m-1$ $D$-connected symbols in $\widehat S$.
\item \label{DC3} Each $D$-connected symbol can be paired uniquely with an opposite symbol on the diagonal.
\end{enumerate}
\end{lemma}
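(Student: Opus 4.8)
The plan is to establish the three properties of $D$-connected symbols by analyzing how the recursively-defined $\a\b$-paths interact with the region $D$ and the staircase filling rules. Throughout I will use the filling rules from Definition~\ref{def} repeatedly: a $\b$ or $\delta$ forces its whole row to the west to be empty, and an $\a$ or $\gamma$ forces its whole column to the north to be empty. In the $\a/\b$-version this means: west of a $\b$ in a row there are no symbols, and north of an $\a$ in a column there are no symbols.

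For \eqref{DC1}, I would argue that the $\a\b$-path of any symbol $y$ sharing a row or column with a $D$-connected symbol $x_j^i$ eventually merges with (or coincides with) the $\a\b$-path of $x_j^i$, hence also terminates on $D$. The key point is that rules~\eqref{betabelow} and~\eqref{alpharight} of Definition~\ref{abpath} link a symbol to the \emph{unique} relevant symbol below it in its column (a $\b$) or east of it in its row (an $\a$) --- uniqueness follows from the filling rules, since two $\b$'s in the same row or two $\a$'s in the same column are forbidden. So in any given row there is at most one $\a$ and at most one $\b$ that can serve as a path continuation, and symbols in that row funnel into the same path; similarly for columns. Combined with rule~\eqref{kdiagsymb}, which attaches any column or row meeting the $k$th diagonal to the corresponding diagonal symbol, this shows the set of $D$-connected boxes is a union of full rows and full columns of $\widehat S$ intersected with its shape, which immediately gives the closure property.

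For \eqref{DC3}, the idea is to build the pairing by following each $D$-connected symbol's $\a\b$-path to where it reaches $D$ and reading off an associated box on the first diagonal. By \eqref{DC1} we may organize $D$-connected symbols by the row or column they occupy; each such row has a distinguished box on the first diagonal (its diagonal box), and likewise each column. The construction must check that this assignment can be made injective: the path of $x_j^i$ reaches $D$ through a sequence of vertical and horizontal segments, and the first diagonal box one lands on --- either the $\b_1^1$-type box at the bottom of a column segment or the $\a$-type box at the end of a horizontal segment bounding $D$ --- is not already claimed by another $D$-connected symbol whose path exits $D$ elsewhere. Here one uses that the broken line defining $D$ is monotone (going alternately down and right, from $\b_1^1$ up to $x_1^k$, across, down, across, $\dots$, ending at $\a_{k+j_m-1}^1$), so distinct "exit points" of paths correspond to distinct diagonal boxes.

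Finally, \eqref{DC2} is the quantitative consequence. The subtableau $\widehat S$ has size $\hk = k + j_m - 1$, so its first diagonal has $\hk$ boxes. Of these, $m$ are the diagonal boxes forced by the $k$th-diagonal symbols $x_{j_1}^k,\dots,x_{j_m}^k$ on the $\a$-side (the boxes $\a_{k+j_l-1}^1$), and one more (the box $\b_1^1$, the bottom-left corner) anchors $D$; by the injective pairing of \eqref{DC3} each $D$-connected symbol consumes a \emph{distinct} diagonal box, and it consumes one \emph{not} among these $m+1$ already-spoken-for anchor boxes --- or rather, the count works out so that at most $\hk - (m+1) = \hk - m - 1$ diagonal boxes remain available, bounding the number of $D$-connected symbols. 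I expect \eqref{DC3} --- making the pairing genuinely well-defined and injective by tracking path endpoints along the staircase --- to be the main obstacle, since it requires a careful case analysis of how an $\a\b$-path can terminate on the broken boundary of $D$ (through a vertical segment versus a horizontal segment, and at an interior corner versus an endpoint), and the other two parts follow from it together with the filling rules and the size computation. I would prove \eqref{DC1} first (it is purely structural), then \eqref{DC3}, then read off \eqref{DC2}.
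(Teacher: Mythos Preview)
Your treatment of \eqref{DC1} matches the paper's: both use the filling rules to show that symbols sharing a row or column funnel into the same $\a\b$-path.

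For \eqref{DC3} and \eqref{DC2}, however, the paper's arguments are considerably simpler than what you propose, and your sketch of \eqref{DC2} has a gap. The paper's pairing for \eqref{DC3} does not follow the $\a\b$-path at all: it simply sends each $D$-connected $\b$ to the first-diagonal box in the \emph{same row}, and each $D$-connected $\a$ to the first-diagonal box in the \emph{same column}. The filling rules force that first-diagonal box to carry the opposite symbol (e.g.\ a $\b$ off the first diagonal forces the diagonal box in its row to be $\a$, since a diagonal $\b$ there would empty the row to its west), and injectivity is immediate because a row contains at most one $\b$ and a column at most one $\a$. No path-tracing or case analysis of exit points on the broken boundary is needed.

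For \eqref{DC2}, the paper does not deduce the bound from \eqref{DC3}. Instead it uses the elementary fact that a staircase tableau of size $\hk$ has at most $2\hk-1$ symbols in total; subtracting the $\hk$ symbols on the first diagonal and the $m$ symbols on the $k$th diagonal leaves at most $\hk-m-1$ symbols elsewhere, and $D$-connected symbols (being on neither diagonal) are among these. Your proposed route---arguing that $m+1$ specific diagonal boxes are ``anchor'' boxes unavailable to the pairing---is not justified as stated: you would need to show that no $D$-connected symbol is ever paired with one of those $m+1$ boxes, and your own hedging (``or rather, the count works out'') signals that this step is missing. The direct symbol count avoids the issue entirely.
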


\begin{proof}
By Definition~\ref{abpath}(\ref{kdiagsymb}), this is trivial if the $D$-connected symbol is in the same row or column of a symbol on the $k^{th}$ diagonal. Otherwise, consider two cases. First, if the $D$-connected symbol is a $\b$, the only possible symbols in its row are $\a$'s which would be connected to the $\b$ by Definition~\ref{abpath}(\ref{alpharight}) and thus follow the same $\a\b$-path. The only potential symbols in the same column are $\b$'s and one $\a$ which is above all of the $\b$'s in this column (this $\a$ exists when not in the trivial case). All of the $\b$'s would be connected to this $\a$ and follow the same $\a\b$-path. The second case, if the $D$-connected symbol is an $\a$, follows similarly.

%If the $D$-connected symbol is a beta, then there is an alpha east and in the same column that connects to this beta, and the only other possible symbols in that column are more betas south of the $D$-connected beta. All of these betas would also connect to the alpha, and then their $\a\b$-paths would coincide with the path of the $D$-connected beta. I.e. they will also be $D$-connected. The only symbols possible in the same row are alphas and they must be to the right of the $D$-connected beta. The alphas' $\a\b$-path would connect to the $D$-connected beta and subsequently coincide with the $D$-connected beta's path. \\
%
%If the $D$-connected symbol is an alpha, the proof follows similarly. \\

Since there are at most $2\hk-1$ symbols in $\widehat S$, and there are $m$ symbols on the $k^{th}$ and $\hk$ symbols on the first diagonal, there are at most $\hk-m-1$ D-connected symbols. 

The pairing can be defined as follows: pair each $D$-connected $\b$ with the $\a$ forced in the same row on the first diagonal. Pair each $D$-connected $\a$ with the $\b$ forced in the same column on the first diagonal. This pairing is unique since there is only one $\b$ in any given row and only one $\a$ in any given column. 
\end{proof}

\begin{figure}[htbp] 
\setlength{\unitlength}{0.4cm}
\begin{center}
\begin{picture}
(0,0)(4.5,11)\thicklines

%1st
%vertical lines
\put(0,0){\line(0,1){11}}
\put(1,0){\line(0,1){11}}
\put(2,1){\line(0,1){10}}
\put(3,2){\line(0,1){9}}
\put(4,3){\line(0,1){8}}
\put(5,4){\line(0,1){7}}
\put(6,5){\line(0,1){6}}
\put(7,6){\line(0,1){5}}
\put(8,7){\line(0,1){4}}
\put(9,8){\line(0,1){3}}
\put(10,9){\line(0,1){2}}
\put(11,10){\line(0,1){1}}

%horizontal lines
\put(0,11){\line(1,0){11}}
\put(0,10){\line(1,0){11}}
\put(0,9){\line(1,0){10}}
\put(0,8){\line(1,0){9}}
\put(0,7){\line(1,0){8}}
\put(0,6){\line(1,0){7}}
\put(0,5){\line(1,0){6}}
\put(0,4){\line(1,0){5}}
\put(0,3){\line(1,0){4}}
\put(0,2){\line(1,0){3}}
\put(0,1){\line(1,0){2}}
\put(0,0){\line(1,0){1}}

%greek symbols

%diagonal symbols
\put(10.25, 10.25){$\a$}
\put(9.25, 9.25){$\b$}
\put(8.25, 8.25){$\a$}
\put(7.25, 7.25){$\a$}
\put(6.25, 6.25){$\b$}
\put(5.25, 5.25){$\a$}
\put(4.25, 4.25){$\a$}
\put(3.25, 3.25){$\b$}
\put(2.25, 2.25){$\b$}
\put(1.25, 1.25){$\b$}
\put(0.25, 0.25){$\b$}

\put(9.45, 10.25){$\a$}
\put(6.45, 7.25){$\a$}

\put(0.45,7.25){$\a$}

\put(2.4, 4.25){$\b$}
\put(2.45, 5.25){$\a$}
\put(0.4, 5.25){$\b$}
\put(3.45, 10.25){$\b$}

%D
\put(.5, 0){\line(0,1){7.5}}
\put(0.5, 7.5){\line(1,0){3}}
\put(3.5, 7.5){\line(0,1){3}}
\put(3.5, 10.5){\line(1,0){7}}

\color{blue}
\put(0.04, 7.05){$\circ$}
\put(3.04, 10.05){$\circ$}

\put(0.04, 5.04){$\bullet$}
\put(0.25, 5.2){\line(0,1){1.95}}
\put(2.04, 4.04){$\bullet$}
\put(2.25, 4.2){\line(0,1){1}}
\put(9.04, 10.04){$\bullet$}
\put(3.35, 10.2){\line(1,0){6}}
\put(2.04, 5.04){$\bullet$}
\put(0.25, 5.2){\line(1,0){2}}

\put(6.04, 7.04){$\bullet$}
\put(0.35, 7.2){\line(1,0){6}}

\end{picture}

\vspace{5cm}
\caption{A staircase tableau of size $11$ with $m=2$, $k=8$, and $\hk=11$. $D$ is denoted by a black line. There are four $D$-connected symbols and their $\a\b$-paths are pictured with blue lines.
\label{Dpaths}}

\end{center}

\end{figure}
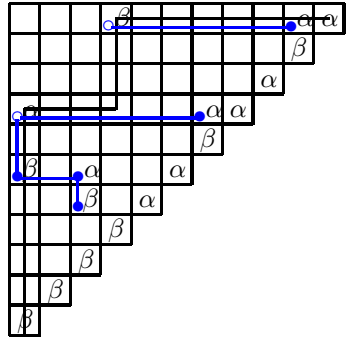

\section{The Distribution of $\a$'s}

%The goal of this section is to compute asymptotically
%\[\mbP(\a_{j_1}^{k}, \a_{j_2}^k,\dots,\a_{j_r}^k)\]
%for $1\le j_1<\dots<j_r\le n-k+1$.
In this section, we will consider the case where $x_j^k=\a$ for all $j\in\{j_1,j_2,\dots,j_r\}$. Notice that the $(1,1)$ and $(1,k)$ entries of $\widehat S$  are $\a$  and its $(k,1)$ entry is $\b$ (these correspond to the $(n-k+1,1)$, $(n-k+1,k)$, and $(n,1)$ entries of $S$). Similarly, the first diagonal entries in the same row as the $a_{j_2}^k,\dots,a_{j_m}^k$ are $\a$'s and in the same column are  $\b$'s. The remaining boxes in those columns and rows may have non--zero entries (namely, some of the boxes in those columns may have $\b$'s and some of the boxes in those rows may have $\a$'s). But notice that any of these symbols are $D$-connected. We let $D$ be the number of $D$-connected symbols and note that by Property~\eqref{DC2} from Lemma \ref{DcProp}, $1\leq D\leq \hk-m-1$.  
%\subsection{The Weight of S}
When adding the weights of the tableaux that contribute to the event $\{\a_1,a_{j_2}^k,\dots,\a_{j_r}^k\}$ we group the tableaux by the value of $D$. 
\[\sum_S wt(S)=\sum_{h=0}^{\hk-m-1}\sum_{S:\ D=h} wt(S),\]
where the sums are over those $S$ that satisfy $\a _{1}^k,\dots,\a_{j_r}^k$ (and in the second sum also have $h$ $D$-connected symbols).

If $D=h$, then by Property~(\ref{DC1}) and~(\ref{DC3}) from Lemma \ref{DcProp} the total contribution of weights from this case is, 
\[\a^{d}\b^{h+m}\sum_{T\in\mathcal S_{n-d}\atop \a_{j_{m-d+1}}^k,\dots,\a_{j_{r-d}}^k}wt(T).\]
where $d:=h+2m$ is used where convenient. Dividing by $Z_n(\a,\b)=(\a\b)^{d}(a+b+n-1)_{d}Z_{n-d}(\a,\b)$ gives that probability of this case to be
\[\frac {b^m}{(a+b+n-1)_{d}}\mbP_{n-d}(\a_{j_{m-d+1}}^k,\dots,\a_{j_{r-d}}^k).\] 

Define,
\[
C_{k,h}:=\#\text{$k\times k$ tableaux with $h$ $D$-connected symbols}.
\]

Note that $C_{k,h}$ depends on $k$ and $h$ but not on $n$ (and $C_{k,0}=1$). With this definition,  we would get
\[\mbP_n(\a_1^k,\a_{j_2}^k,\dots,\a_{j_r}^k,D=h)=C_{\hk,h} \frac {b^m}{(n+a+b-1)_{d}}\mbP_{n-d}(\a_{j_{m-d+1}}^k,\dots,\a_{j_{r-d}}^k),\]
In particular, it follows that 
\begin{eqnarray}
\mbP_n(\a_1^k,\a_{j_2}^k,\dots,\a_{j_r}^k)&=& \sum_{h=0}^{\hk-m-1}C_{\hk,h} \frac {b^m}{(n+a+b-1)_{h+2m}}\mbP_{n-d}(\a_{j_{m-d+1}}^k,\dots,\a_{j_{r-d}}^k)\label{breakdown}.
\end{eqnarray}

\subsection{The asymptotic distribution of $\a$'s on the k$^{th}$ diagonal}
%We begin by deriving the asymptotic probability for the joint distribution of $\alpha$'s on the k$^{th}$ main diagonal. We let 
%$\a_j^{(k)}$ be the event  that there is an $\alpha$ in the $j$th column on the k$^{th}$ main diagonal (i.e in the box $(n-j-k+2,j)$, $1\le j\le n-k+1$) but since we will be dealing exclusively with the k$^{th}$ main diagonal through the remainder of this section we will drop the superscript. 

We can now give the asymptotic joint distribution of the $\alpha$'s on the k$^{th}$ diagonal. Note that since we will be dealing exclusively with the k$^{th}$ diagonal through the remainder of this section we will drop the superscript $k$.
\begin{theorem}
\label{rasymbolsk}
Let $ 1 \leq j_{1} < ... < j_{r} \leq n-k+1$. If 
\begin{equation}\label{k_diff}j_{l} \leq j_{l + 1} - k, \hspace{2mm} \forall l = 1, 2, ..., r -
1\end{equation} 
then 
\[\mbP_{n,\a,\b}(\a_{j_1},...,\a_{j_r}) = \prod_{l=1}^{r}\frac{b + j_{r - l + 1} - 2r + 2l - 1}{(n+a+b-2r+2l-1)_2} + O\left(\frac{1}{(n+a+b)^{r+1}}\right). \]
Otherwise,
\[\mbP_{n,\a,\b}(\a_{j_1},...,\a_{j_r}) = O\left(\frac{1}{(n+a+b)^r}\right).
\]
\end{theorem}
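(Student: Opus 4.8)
The plan is to prove the second ("otherwise") bound first, since it is the easier of the two and isolates the combinatorial input, and then to bootstrap it into the precise asymptotics in the first case by an induction on $r$ using the recursion \eqref{breakdown}. Throughout I work with the reduction to $j_1=1$ described in Section~\ref{prelim} (removing the first $j_1-1$ columns via \eqref{subt}), keeping in mind that $b$ then depends on $n$ but only through a shift that is negligible at the order of accuracy we need; I also use $m:=\min\{l\ge 1:\ j_l\le j_{l+1}-k\}$ and $\hk=k+j_m-1$ and $d=h+2m$ as set up there.

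For the "otherwise" case, suppose \eqref{k_diff} fails, so $m\ge 2$. Then in \eqref{breakdown} every term carries a factor $b^m/(n+a+b-1)_{h+2m}$ with $m\ge 2$, hence is $O\!\big((n+a+b)^{-(2m-m)}\big)=O\!\big((n+a+b)^{-m}\big)$ uniformly, times a probability $\mbP_{n-d}(\a_{j_{m-d+1}},\dots,\a_{j_{r-d}})\le 1$ of a shorter configuration; crucially, $d=h+2m\le \hk-m-1+2m=\hk+m-1$ depends only on $k,m$ and not on $n$, so the shift from $n$ to $n-d$ is harmless and the number of terms $\hk-m$ is bounded. Summing the bounded number of terms gives $O\!\big((n+a+b)^{-m}\big)=O\!\big((n+a+b)^{-r}\big)$ when $m\ge 2$, because $m\ge 2$ forces... wait — one must be careful: we need $m\le r$ always, and indeed $m\ge r$ cannot happen when \eqref{k_diff} fails unless $m=r$, in which case the spacing condition holds for the first $r-1$ gaps, contradiction; so $2\le m\le r$ and $O((n+a+b)^{-m})\subseteq O((n+a+b)^{-r})$ fails in general — instead I should iterate: apply \eqref{breakdown} once to peel off the first $m$ symbols at cost $(n+a+b)^{-(m+h)}$ with $h\ge 0$, leaving $\mbP_{n-d}$ of $r-m$ fewer symbols, and induct on $r$ with the stated bound, the base case $r=0,1$ being \eqref{boxk}. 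The inductive step multiplies $(n+a+b)^{-(2m-m)}$... the honest bookkeeping is that each peeled symbol on the $k$th diagonal contributes one factor of order $(n+a+b)^{-1}$ through the $b^m$ versus $(\cdot)_{2m}$ balance together with the recursive probability, so $r$ symbols give $(n+a+b)^{-r}$; this is exactly what the induction on $r$ via \eqref{breakdown} formalizes, and it simultaneously covers both cases — the only difference being whether we can also extract the sharp leading constant.

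For the main case, assume \eqref{k_diff}, so $m=1$ and $j_m=j_1=1$, hence $\hk=k$. Then \eqref{breakdown} reads $\mbP_n(\a_1,\a_{j_2},\dots,\a_{j_r})=\sum_{h=0}^{k-2}C_{k,h}\,\frac{b}{(n+a+b-1)_{h+2}}\,\mbP_{n-d}(\a_{j_2-?},\dots)$, where the residual configuration is the original one with the first symbol and its forced diagonal entries removed, i.e. $\a_{j_2},\dots,\a_{j_r}$ read inside a tableau of size $n-d$ with $d=h+2$; note $j_2,\dots,j_r$ still satisfy \eqref{k_diff} among themselves. The dominant term is $h=0$, $d=2$, with $C_{k,0}=1$, giving $\frac{b}{(n+a+b-1)_2}\,\mbP_{n-2}(\a_{j_2},\dots,\a_{j_r})$; every $h\ge 1$ term is smaller by a factor $O((n+a+b)^{-1})$ and, by the already-established "otherwise"/base bound applied in size $n-d$, contributes $O((n+a+b)^{-(h+1)}\cdot (n+a+b)^{-(r-1)})=O((n+a+b)^{-(r+1)})$, absorbed into the error. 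Applying the inductive hypothesis to $\mbP_{n-2}(\a_{j_2},\dots,\a_{j_r})$ — a product over $l$ from $1$ to $r-1$ of $\frac{b+j_{r-l+1}-2(r-1)+2l-1}{(n-2+a+b-2(r-1)+2l-1)_2}$ plus $O((n+a+b)^{-r})$ — and multiplying by $\frac{b}{(n+a+b-1)_2}=\frac{b+1\cdot\,\text{(shift)}}{(n+a+b-1)_2}$, one checks that the index shift $n\mapsto n-2$, $r\mapsto r-1$ in the product reindexes exactly to the product $\prod_{l=1}^{r}\frac{b+j_{r-l+1}-2r+2l-1}{(n+a+b-2r+2l-1)_2}$: the $l=1$ factor is the new $\frac{b}{(n+a+b-2r+1)_2}$ — here I must reconcile $\frac{b}{(n+a+b-1)_2}$ with $\frac{b}{(n+a+b-2r+1)_2}$, which differ only by lower-order terms in the denominator's falling factorial, again an $O((n+a+b)^{-(r+1)})$ correction after multiplying by the size-$(r-1)$ product — and the remaining factors $l=2,\dots,r$ match the shifted inductive product verbatim. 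The error term $\frac{b}{(n+a+b-1)_2}\cdot O((n+a+b)^{-(r-1)})=O((n+a+b)^{-(r+1)})$ is likewise absorbed, giving the claimed formula with error $O((n+a+b)^{-(r+1)})$.

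The main obstacle I anticipate is the bookkeeping in this last reindexing: matching $\frac{b}{(n+a+b-1)_2}\cdot\prod_{l=1}^{r-1}\frac{b+j_{r-l+1}-2(r-1)+2l-1}{(n-2+a+b-2(r-1)+2l-1)_2}$ term-by-term with $\prod_{l=1}^{r}\frac{b+j_{r-l+1}-2r+2l-1}{(n+a+b-2r+2l-1)_2}$ requires carefully substituting $l\mapsto l-1$ in the inductive product, verifying that $-2(r-1)+2(l-1)-1 = -2r+2l-1$ (which it does) and that the $n-2$ in the denominator combines with $-2(r-1)$ to give $-2r$ in the new variable, all while controlling the discrepancy between $(n+a+b-1)_2$ and $(n+a+b-2r+1)_2$ in the $l=1$ slot and between the two forms of each denominator $(\cdot)_2$. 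Because each $(\cdot)_2$ factor is $(n+a+b)^2(1+O(1/n))$, these discrepancies are genuinely lower order, but making the "$O$" uniform in the $j_i$'s (which range up to $n-k+1$) needs a brief remark that $b+j_{r-l+1}-2r+2l-1 = O(n)$ so each factor is $O(1/n)$ and the product of $r$ of them is $O(n^{-r})$, whence relative errors of size $O(1/n)$ become absolute errors of size $O(n^{-(r+1)})$ as required. A secondary point to get right is the precise claim $C_{k,0}=1$ and $C_{k,h}$ independent of $n$ (already asserted in the excerpt), which is what lets the $h\ge 1$ tail be summed to a bounded quantity before multiplying by the small falling-factorial factor.
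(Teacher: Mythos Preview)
Your approach is the paper's: induct on $r$ via \eqref{breakdown}, split $m=1$ versus $m>1$, and within $m=1$ isolate the $h=0$ main term from the $h\ge1$ tail. Two clarifications will remove the wobbles.

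First, the reindexing you flag as the main obstacle is in fact \emph{exact}, not merely approximate. With $j_1=1$, the peeled factor $\frac{b}{(n+a+b-1)_2}$ is precisely the $l=r$ term of the target product (not the $l=1$ term as you write): $b+j_{r-r+1}-2r+2r-1=b+j_1-1=b$ and $n+a+b-2r+2r-1=n+a+b-1$. The reason you are fighting the denominators is that you omitted the column shift in the residual: after removing $d=2$ rows/columns the positions are $j_2-2,\dots,j_r-2$ in size $n-2$, and the inductive formula then reads
\[
\prod_{l=1}^{r-1}\frac{b+(j_{r-l+1}-2)-2(r-1)+2l-1}{\big((n-2)+a+b-2(r-1)+2l-1\big)_2}
=\prod_{l=1}^{r-1}\frac{b+j_{r-l+1}-2r+2l-1}{(n+a+b-2r+2l-1)_2},
\]
which is verbatim the $l=1,\dots,r-1$ portion of the target. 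No reconciliation of $(n+a+b-1)_2$ with $(n+a+b-2r+1)_2$ is needed.

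Second, on the ``otherwise'' case: you correctly retract the claim that failure of \eqref{k_diff} forces $m\ge 2$; it can fail at a later gap while $m=1$. The paper simply folds this into the $m=1$ branch: the $h=0$ term is $\frac{b}{(n+a+b-1)_2}\,\mbP_{n-2}(\a_{j_2-2},\dots,\a_{j_r-2})$, and if \eqref{k_diff} fails among $j_2,\dots,j_r$ one applies the inductive ``otherwise'' bound to the residual to get $O((n+a+b)^{-1})\cdot O((n+a+b)^{-(r-1)})=O((n+a+b)^{-r})$, with the $h\ge1$ terms smaller still. The $m>1$ branch is exactly as you say: $b^m/(n+a+b-1)_{h+2m}=O((n+a+b)^{-(m+h)})$ times a residual of $r-m$ symbols bounded inductively by $O((n+a+b)^{-(r-m)})$ gives $O((n+a+b)^{-(r+h)})\subset O((n+a+b)^{-r})$.
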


\begin{proof} 
It suffices to prove the theorem for $j_1=1$. For if $j_1>1$, by Equation~\eqref{subtab}, we can consider $\mbP_{n-j_1+1,\a,\hb}(\a_{1},...,\a_{j_r-j_1+1})$ where $\hbb=b+j_1-1$ and the conclusion will follow from the $j_1=1$ case.

%where $\hbb=b+j_1-1$ which is equivalent to $\mbP_{n,\a,\b}(\a_{j_1},...,\a_{j_r})$ (See \cite{HJ}, Theorem 6.1) and then
%\beq*
%\mbP_{n-j_1+1,\a,\hb}(\a_{1},...,\a_{j_r-j_1+1})&=&\prod_{l=1}^{r}\frac{\hbb + (j_{r - l + 1}-j_1-1) - 2r + 2l-1}{((n-j_1+1)+a+\hbb-2r+2l-1)_2}+O\left(\frac{1}{((n-j_1+1)+a+\hbb)^{r+1}}\right)\\
%&=&\prod_{l=1}^{r}\frac{b + j_{r - l + 1} - 2r + 2l - 1}{(n+a+b-2r+2l-1)_2} + O\left(\frac{1}{(n+a+b)^{r+1}}\right). 
%\eeq*
%as desired.

Therefore, W.L.O.G. assume $j_1=1$ and then the proof follows by induction on $r$. 
When $r=1$, apply Equation~\eqref{breakdown} (with $m=1$),
\beq*
\mbP_{n,\a,\b}(\a_{j_1})&=&
\sum_{h=0}^{k-2}C_{k,h} \frac {b}{(n+a+b-1)_{h+2}}\\
&=&
\frac {b}{(n+a+b-1)_{2}}+\sum_{h=1}^{k-2}C_{\hk,h} \frac {b}{(n+a+b-1)_{h+2}}.
\eeq*
Each of the expressions in the sum is $O(1/n^2)$ (recall that $b$ may depend on $n$). Therefore, the above is,
\[
=\frac b{(n+a+b-1)_{2}}+O(1/n^2)
\]
as required.

Assume the statement holds for integers up to $r-1$ and then consider the following two cases.

\begin{case}\label{first far} $m=1$.
\end{case}
Applying Equation\eqref{breakdown} with $m=1$,

\begin{eqnarray}\label{allcases}
&&\mbP_{n,\a,\b}(\a_{1},...,\a_{j_r})=\sum_{h=0}^{k-2}C_{k,h} \frac {b}{(n+a+b-1)_{h+2}}\mbP_{n-d}(\a_{j_{2-d}},\dots,\a_{j_{r-d}}).
\end{eqnarray}

The main contribution is when $h=0$. Recall that $C_{k,0}=1$.
\beq*
\mbP_{n,\a,\b}(\a_{1},...,\a_{j_r},h=0)=\frac {b}{(n+a+b-1)_{2}}\mbP_{n-2}(\a_{j_{2}-2},\dots,\a_{j_{r}-2}).
\eeq*

By the induction hypothesis, if $j_l\leq j_{l+1}-k$  for all $l=2,\dots,r-1$ then this is,
\beq*
&&\frac{b}{(n+a+b-1)_{2}}\left(\prod_{k=1}^{r-1}\frac{b + j_{r - k + 1}  - 2r + 2k-1 }{(n+a+b-2r+2k-1)_2}+O\left(\frac{1}{(n+a+b)^{r}}\right)\right)\\
&&\qquad=\prod_{k=1}^{r}\frac{b + j_{r - k + 1} - 2r + 2k-1}{(n+a+b-2r+2k-1)_2}+O\left(\frac{1}{(n+a+b)^{r+1}}\right).
\eeq*
On the other hand, if for some $2\le l\le r-1$, $j_l>j_{l+1}-k$ then by the induction hypothesis again and the fact that $b=O(n)$ the same expression is 
\beq*
&&\frac{b}{(n+a+b-1)_2}\cdot O\left(\frac{1}{(n+a+b)^{r-1}}\right)%\\
=O\left(\frac{1}{(n+a+b)^r}\right).
\eeq*
Now, returning to Equation~\eqref{allcases}, when $h>0$, by the induction hypothesis,
\beq*
\mbP_{n,\a,\b}(\a_{1},...,\a_{j_r},h>0)
%&=&\sum_{h=0}^{\hk-2}C_{\hk,h} \frac {b}{(n+a+b-1)_{h+2}}\mbP_{n-h-2}(\a_{j_{1}}^k,\dots,\a_{j_r}^k)\\
&=&\sum_{h=1}^{k-2}C_{k,h} \frac {b}{(n+a+b-1)_{h+2}}O\left(\frac{1}{(n+a+b)^{r-1}}\right)\\
&=&\sum_{h=1}^{k-2}C_{k,h}O\left(\frac{1}{(n+a+b)^{r+h}}\right)\\
&=&O\left(\frac{1}{(n+a+b)^{r+1}}\right).
\eeq*

\begin{case}\label{first close}  $m>1$. 
\end{case}
Notice that the cases $j_l=k$, $l\leq m$ are impossible by the rules of staircase tableaux and thus have probability zero.

For all other cases, by Equation~\eqref{breakdown}, 
 
\[
\mbP_{n,\a,\b}(\a_{j_1},...,\a_{j_r})
=\sum_{h=0}^{\hk-2m}C_{\hk,h} \frac {b^m}{(n+a+b-1)_{h+2m}}\mbP_{n-d}(\a_{j_{m-d+1}},\dots,\a_{j_{r-d}}).
\]
By the induction hypothesis,
\beq*
&=&\sum_{h=0}^{\hk-2m}C_{\hk,h} \frac {b^m}{(n+a+b-1)_{h+2m}}O\left(\frac{1}{(n+a+b)^{r-1}}\right)\\
&=&\sum_{h=0}^{\hk-2m}C_{\hk,h} O\left(\frac{1}{(n+a+b)^{r+h+m-1}}\right)\\
&=&O\left(\frac{1}{(n+a+b)^r}\right).\\
\eeq*

Combining Cases \ref{first far} and \ref{first close}, if $j_{l} \leq j_{l + 1} - k, \hspace{2mm} \forall\  l = 1, 2, \dots, r -1$, then 
\beq*
\mbP_{n,\a,\b}(\a_{j_1},...,\a_{j_r}) 
&=&\prod_{l=1}^{r}\frac{b + j_{r - l + 1} - 2r + 2l - 1}{(n+a+b-2r+2l-1)_2} + O\left(\frac{1}{(n+a+b)^{r+1}}\right).
\eeq*
Otherwise, 
\[
\mbP_{n,\a,\b}(\a_{j_1},...,\a_{j_r}) = %4\cdot O\left(\frac{1}{(n+a+b)^r}\right)=
O\left(\frac{1}{(n+a+b)^r}\right).
\]
\end{proof}

We can now give the asymptotic distribution of the number of $\a$'s on the $k$th diagonal.
%\begin{proof}
%By induction on $r$.
%When $r=1$:
%\[\sum_{J_{1,m}} \left( \prod_{k=1}^{1} j_{1-k+1} \right) = \sum_{j_1=1}^m j_{1} = \frac{(m+1)m}{2}.
%\]
%Assume the statement holds for $r-1$. Then:
%\begin{eqnarray*}
%\sum_{J_{r,m}} \left(\prod_{k=1}^{r} j_{r-k+1} \right) &=&
%\sum_{j_r=2r-1}^{m} j_r\left( \sum_{J_{r-1,j_r-2}}  \prod_{k=2}^{r} j_{r-k+1} \right) \\&=&
%\sum_{j_r=2r-1}^{m} j_{r}
%\frac{(j_r-1)_{2(r-1)}}{2^{r-1}(r-1)!}\\&=&\frac1{2^{r-1}(r-1)!}\sum_{j_{r}=2r-1}^m
%( j_{r})_{2r-1}
%\end{eqnarray*}
%where the second equality is by the induction hypothesis.
%Since
%\[\sum_{j_r=2r-1}^{m} (j_{r})_{2r-1}=\sum_{j_{r}=0}^m (
%j_{r})_{2r-1}\]
%the lemma will be proved once we verify that 
%\[\sum_{j=0}^m(j)_t
%=\frac{(m+1)_{t+1}}{t+1},\]
%for any non-negative integer $t$ (and apply it with $t=2r-1$). 
%Using the identity 
%\[\sum_{j=0}^m{j\choose t}={m+1\choose t+1}\]
%(see, e.g. \cite[Formula~(5.10)]{GKP})   we see that 
%\[\sum_{j=0}^m(j)_t=
%\sum_{j=0}^{m} \frac{j!}{(j-t)!}=t!\sum_{j=0}^m{j\choose t}
%=t!{m+1\choose  t+1}
%=t!\frac{(m+1)_{t+1}}{(t+1)!}=\frac{(m+1)_{t+1}}{m+1},\]
%as asserted.
% \end{proof}
\begin{theorem}\label{alpha_on_kth}
Let $A_{n}^{(k)}$ be the number of $\alpha$'s on the $k^{th}$ diagonal of a random weighted staircase tableau. Then,
as $n\to\infty$,
\[
A_{n}^{(k)} \stackrel{d}{\rightarrow} Pois \left( \frac{1}{2} \right).
\]
\end{theorem}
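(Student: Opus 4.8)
The plan is to use the method of factorial moments: by a standard criterion (see e.g. the lemmas already invoked from \cite{HL}), a sequence of nonnegative integer valued random variables converges in distribution to $\mathrm{Pois}(\lambda)$ provided all factorial moments converge to the corresponding factorial moments of $\mathrm{Pois}(\lambda)$, namely $\mathbb{E}(X)_r \to \lambda^r$ for every fixed $r\ge 1$. Here $\lambda = 1/2$, so it suffices to show that for each fixed $r$,
\[
\mathbb{E}\bigl(A_n^{(k)}\bigr)_r \;\longrightarrow\; \frac{1}{2^r}
\qquad\text{as } n\to\infty.
\]

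The key steps, in order: First I would write $A_n^{(k)} = \sum_{j=1}^{n-k+1} \a_j^k$ as a sum of indicators and apply Lemma~\ref{fac_mom} to get
\[
\mathbb{E}\bigl(A_n^{(k)}\bigr)_r = r!\sum_{1\le j_1 < \dots < j_r \le n-k+1} \mbP_{n,\a,\b}(\a_{j_1}^k,\dots,\a_{j_r}^k).
\]
Second, I would split the sum according to whether the spacing condition \eqref{k_diff} holds, i.e. whether $(j_1,\dots,j_r)\in J_{r,\,n-k+1}$ in the sense that $j_l \le j_{l+1}-k$ for all $l$ (which for fixed $k$ is comparable to $J_{r,m}$ up to lower-order corrections). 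By Theorem~\ref{rasymbolsk}, tuples violating the spacing condition contribute $O(1/(n+a+b)^r)$ each, and there are $O(n^r)$ of them in total — this is exactly on the boundary, so I must argue more carefully that their total contribution is $o(1)$; the point is that the violating tuples number only $O(n^{r-1})$ (fixing any one close pair collapses a degree of freedom, and $k$ is bounded), so their contribution is $O(n^{r-1}\cdot n^{-r}) = o(1)$. Third, for the tuples satisfying \eqref{k_diff}, I substitute the main term from Theorem~\ref{rasymbolsk},
\[
\mbP_{n,\a,\b}(\a_{j_1},\dots,\a_{j_r}) = \prod_{l=1}^{r}\frac{b + j_{r-l+1} - 2r + 2l - 1}{(n+a+b-2r+2l-1)_2} + O\!\left(\frac{1}{(n+a+b)^{r+1}}\right),
\]
and sum. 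The error terms again total $O(n^r \cdot n^{-(r+1)}) = o(1)$. For the main term, I pull out the denominators — each factor $(n+a+b-2r+2l-1)_2 \sim n^2$ uniformly — so the product of denominators is $\sim n^{2r}$, and the numerator is a product over $l$ of $(b + j_{r-l+1} + \text{const})$. Summing the numerator over the constrained index set and invoking Lemma~\ref{LA} (which evaluates $\sum_{J_{r,m}}\prod_k j_{r-k+1} = (m+1)_{2r}/(2^r r!)$, with $m \approx n$ here), the leading behavior of $r!\sum \prod (b+j_{\cdot}+\text{const})$ is $r!\cdot n^{2r}/(2^r r!) = n^{2r}/2^r$ to leading order, and dividing by the $\sim n^{2r}$ from the denominators yields the limit $1/2^r$. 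I would need to check that the additive constants $-2r+2l-1$ and the contribution of $b$ (which itself is $O(n)$ but, after the reduction $j_1=1$ is \emph{not} used here since we are summing over all $j_1$, so $b$ stays the fixed original parameter) do not affect the leading term — shifts and lower-degree terms in the summand change the sum only at order $n^{2r-1}$, hence vanish after normalization.

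The main obstacle I anticipate is the bookkeeping around the boundary case in the second step: the per-tuple bounds from Theorem~\ref{rasymbolsk} are only barely summable, so everything hinges on correctly counting the number of $r$-tuples in each regime and confirming that only the fully $k$-separated tuples contribute in the limit, while ``almost-separated'' configurations are genuinely one order down in $n$. A secondary subtlety is handling the parameter regimes where $\a\to\infty$ or $\b\to\infty$ (so $a\to 0$ or $b\to 0$): one must verify the asymptotics of the falling factorials and of the numerator $b + j_{\cdot} + \text{const}$ are uniform in these limits, but since $j_{\cdot}$ ranges up to $\sim n$ and dominates, and $b\ge 0$, the leading-order computation is unaffected. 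Once these two points are dispatched, the computation reduces to the clean identity of Lemma~\ref{LA} and the convergence $\mathbb{E}(A_n^{(k)})_r \to 2^{-r}$ follows, giving $A_n^{(k)}\xrightarrow{d}\mathrm{Pois}(1/2)$.
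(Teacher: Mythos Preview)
Your proposal is correct and follows essentially the same route as the paper's proof: factorial moments via Lemma~\ref{fac_mom}, splitting the index set into $k$-separated tuples $J$ and its complement $J^c$, bounding the $J^c$ contribution using the $O(n^{r-1})$ count together with the second estimate of Theorem~\ref{rasymbolsk}, and evaluating the main term over $J$ by reducing to Lemma~\ref{LA} after approximating each factor by $j_l/n^2$. The only cosmetic difference is that the paper makes the passage from $J$ to $J_{r,n-k+1}$ explicit (writing $\sum_J=\sum_{J_{r,n-k+1}}-\sum_{J_{r,n-k+1}\setminus J}$ and bounding the correction), whereas you subsume this under ``comparable up to lower-order corrections''; both amount to the same $O(n^{r-1})$ counting argument you already identified.
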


\begin{proof}
By \cite[Theorem~22, Chapter~1]{B} and Lemma~\ref{fac_mom}, it suffices to show that,
\[
\sum_{1 \leq j_{1} < ... < j_{r} \leq  n-k+1} \mathbb{P}(\a_{j_{1}},\ldots, \a_{j_{r}})\rightarrow \frac{1}{2^{r}r!}
\] 
as $n\rightarrow\infty$.
Write
\[
\sum_{1 \leq j_{1} < ... < j_{r} \leq  n-k+1} \mathbb{P}(\a_{j_{1}},\ldots, \a_{j_{r}})
=\sum_{J} \mathbb{P}(\a_{j_{1}},\ldots, \a_{j_{r}}) +
\sum_{J^c}\mathbb{P}(\a_{j_{1}},\ldots, \a_{j_{r}}), 
\]
where
\[J=\{1\le j_1<\dots<j_r\le  n-k+1:\  \forall\ l=1,\dots,r-1; j_{l+1}-j_l\geq k\}
\]
and
\[J^c=\{1\le j_1<\dots<j_r\le  n-k+1:\  \exists\ l=1,\dots,r-1; j_{l+1}-j_l<k\}.
\] 
By Theorem~\ref{rasymbolsk}, 
\beq* 
&&\sum_{J} \mathbb{P}(\a_{j_{1}},\ldots, \a_{j_{r}}) =
\sum_{J}\left(\prod_{l=1}^{r} \frac{b+j_{r-l+1}-2(r-l)-1}{(n+a+b-2(r-l)-1)_2}+O\left(\frac{1}{(n+a+b)^{r+1}}\right)\right)\\
&&\quad=\sum_{J}\left(\prod_{l=1}^{r} \frac{j_{l}}{n^2}+ O\left(\frac{1}{n^{2r}}\right)\right)+{n-k+1\choose r}\cdot O\left(\frac{1}{n^{r+1}}\right)
=\sum_{J_{r,n-k+1}}\prod_{l=1}^{r} \frac{j_{l}}{n^2}-\sum_{J_{r,n-k+1}\setminus J}\prod_{l=1}^{r} \frac{j_{l}}{n^2}+
O\left(\frac{1}{n}\right).
\eeq*
If $(j_1,\dots,j_r)\in J_{r,n-k+1}\setminus J$ then there exists an $l$ such that $j_{l+1}-j_l<k$ and thus this set has $O\left({n-k+1\choose r-1}\right)$ elements. Therefore, by Lemma~\ref{LA} the expression above is asymptotic to
\[\frac{1}{2^{r}r!}+O\left({n-k+1\choose r-1}\cdot \frac{n^r}{n^{2r}}\right)+O\left(\frac{1}{n}
\right)=
\frac{1}{2^{r}r!}+O\left(\frac{1}{n}\right),\quad\mbox{as\ }n \rightarrow \infty. 
\]
Finally, by Theorem~\ref{rasymbolsk},
\[\sum_{J^c}\mathbb{P}(\a_{j_{1}},\ldots, \a_{j_{r}})=O\left({n-k+1\choose r-1}\frac1{n^r}\right)=O\left(\frac1n\right).
\]
Combining these expressions and taking the limit as $n\rightarrow \infty$ completes the proof.
\end{proof}

\begin{corollary}\label{beta_on_kth}
Let $B_{n}^{(k)}$ to be the number of $\b$'s on the $k^{th}$ diagonal of a random weighted staircase tableau. Then,
as $n\to\infty$,
\[
B_{n}^{(k)} \stackrel{d}{\rightarrow} Pois \left( \frac{1}{2} \right).
\]
\end{corollary}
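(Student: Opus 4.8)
The plan is to deduce Corollary~\ref{beta_on_kth} from Theorem~\ref{alpha_on_kth} by exploiting the transpose-and-swap involution already described in the preliminaries: interchanging rows and columns of a staircase tableau and simultaneously swapping every $\a$ with every $\b$ (and, in the four-symbol picture, every $\gamma$ with every $\delta$) is a weight-preserving bijection $\mathcal S_n \to \mathcal S_n$ that also pushes the measure $\mathbb P_{n,\a,\b}$ forward to $\mathbb P_{n,\b,\a}$ (equivalently, it exchanges the roles of $a$ and $b$). So the first step is to record precisely what this involution does to the $k$th diagonal.

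The second step is the key geometric observation: the $k$th diagonal $\{(i,j):i+j=n-k+2\}$ is invariant, as a set of boxes, under transposition, since $(i,j)\mapsto(j,i)$ preserves $i+j$. Concretely, the box $(n-k-l+2,\,l)$ on the $k$th diagonal is sent to the box $(l,\,n-k-l+2)$, which is the box in column $n-k-l+2$ on the same diagonal; and the symbol in it, if any, is toggled between $\a$ and $\b$. Consequently, under the involution a $\b$ in column $l$ on the $k$th diagonal corresponds exactly to an $\a$ in column $n-k-l+2$ on the $k$th diagonal. Summing over the diagonal, the number of $\b$'s on the $k$th diagonal of $S$ equals the number of $\a$'s on the $k$th diagonal of the transposed-and-swapped tableau $S^{\mathsf T}$. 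Hence $B_n^{(k)}$ under $\mathbb P_{n,\a,\b}$ has the same distribution as $A_n^{(k)}$ under $\mathbb P_{n,\b,\a}$.

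The third step is to invoke Theorem~\ref{alpha_on_kth}, which as stated holds for any fixed $\a,\b\in[0,\infty)$ with $(\a,\b)\neq(0,0)$ (and in the limiting regimes allowed after Definition~\ref{def1}) and whose limit $\mathrm{Pois}(1/2)$ does not depend on $\a$ or $\b$. Applying it with the roles of $\a$ and $\b$ exchanged gives that $A_n^{(k)}$ under $\mathbb P_{n,\b,\a}$ converges in distribution to $\mathrm{Pois}(1/2)$, and therefore so does $B_n^{(k)}$ under $\mathbb P_{n,\a,\b}$. Alternatively, and equivalently, one can bypass the involution entirely and simply rerun the proof of Theorem~\ref{alpha_on_kth} with $\b_j$ in place of $\a_j$: Theorem~\ref{rasymbolsk} already supplies, via the transpose symmetry noted in the text, the estimate $\mathbb P_{n,\a,\b}(\b_{j_1},\dots,\b_{j_r}) = \prod_{l=1}^r \tfrac{a + (n-k-j_{r-l+1}+1) - 2r+2l-1}{(n+a+b-2r+2l-1)_2} + O\big((n+a+b)^{-(r+1)}\big)$ when the $j_l$ are $k$-separated and $O\big((n+a+b)^{-r}\big)$ otherwise, and the same reindexing $j_l\mapsto n-k-j_l+1$ turns the sum $\sum_J \mathbb P(\b_{j_1},\dots,\b_{j_r})$ into a sum over $J_{r,n-k+1}$ of $\prod_l j_l/n^2$ up to lower-order error, so Lemma~\ref{LA} again yields $1/(2^r r!)$.

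I expect essentially no obstacle here: the only point that needs care is checking that the transposition genuinely sends the $k$th diagonal to itself with the column index $l$ mapped to $n-k-l+2$ (so that the counting statistic is preserved, not merely relabeled), and that Theorem~\ref{alpha_on_kth} is being quoted in the correct generality in $(\a,\b)$. Both are immediate from the definitions, so the proof is a short paragraph invoking the involution and Theorem~\ref{alpha_on_kth}.
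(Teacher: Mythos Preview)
Your proposal is correct and follows exactly the paper's approach: the paper's proof is a single sentence invoking the transpose-and-swap involution described in the preliminaries, and you have simply spelled out in more detail what that involution does to the $k$th diagonal and why it carries $B_n^{(k)}$ under $\mathbb P_{n,\a,\b}$ to $A_n^{(k)}$ under $\mathbb P_{n,\b,\a}$.
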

\begin{proof}
This follows from the involution discussed in Section~\ref{intro}.
\end{proof}

%%%%%%%%%%%%%%%%%%%%%%%%%%%%%%%%%%%%%%%%%%%%%%%%%%%%%%%%%%%%%%%%%%%%%%%%%%%%%%5

\section{The distribution of symbols}
In this section, we will consider the case where $x_j^k$ is any symbol for all $j\in\{j_1,j_2,\dots,j_r\}$. When adding the weights of the tableaux that contribute to the event $\{x_{j_1}^k,x_{j_2}^k,\dots,x_{j_r}^k\}$ we group the tableaux by the value of $D$. 
\[\sum_S wt(S)=\sum_{h=0}^{\hk-m-1}\sum_{S:\ D=h} wt(S),\]
where the sums are over those $S$ that satisfy $x _{j_1}^k,\dots,x_{j_r}^k$ (and in the second sum also have $h$ $D$-connected symbols).\\
Now to obtain an $x_{j_1}^{k}$ version of Equation~\eqref{breakdown}, we must handle the cases $m=1$ and $m>1$ separately.
\subsection{m=1}
When $x_{j_1}^{k}=\a$, the weight of $S$ follows easily from Equation~\eqref{breakdown},
\be
\mbP_n(\a_1,x_{j_2}^k,\dots,x_{j_r}^k, m=1)= \sum_{h=0}^{k-2}C_{k,h} \frac {b}{(n+a+b-1)_{h+2}}\mbP_{n-d}(x_{j_{2}-d}^k,\dots,x_{j_r-d}^k)\label{abreakdown}.
\ee
When $x_{j_1}^{k}=\b$, there is ambiguity in the boxes above this symbol and we must handle it carefully. Let $\b_j^i(0_l)$ be the event that there is a $\b$ in column $j$ and diagonal $i$ and there are $l$ empty boxes directly above $\b$ in the same column. 
\\
If $D=h$, then by Property (1) and (3) from Lemma \ref{DcProp} the total contribution of weights from this case is, 
\[\a^{h+1}\b^{h+1}\sum_{T\in\mathcal S_{n-h-1}\atop \b _{1}^1(0_{k-h-2}),x_{j_{2}-h-1}^k,\dots,x_{j_r-h-1}^k}wt(T).\]

Notice that the weight of a tableaux with $\b _{1}^1(0_{k-h-2})$ is the same as the weight of a tableaux with $\b _{k-h-1}^1$. Thus, the above is,

\[\a^{h+1}\b^{h+1}\sum_{T\in\mathcal S_{n-d+1}\atop \b _{k-d+1}^1,x_{j_{2}-d+1}^k,\dots,x_{j_r-d+1}^k}wt(T).\]

Dividing by $Z_n(\a,\b)=(\a\b)^{h+1}(a+b+n-1)_{h+1}Z_{n-h-1}(\a,\b)$ gives that probability of this case is

\beq*
&=&\frac {1}{(a+b+n-1)_{h+1}}\mbP_{n-h-1,\a,\b}(\b _{k-h-1}^1,x_{j_{2}-d+1}^k,\dots,x_{j_r-d+1}^k)\\
&=&\frac {1}{(a+b+n-1)_{h+1}}\left(\mbP_{n-h-1,\a,\b}(x_{j_{2}-d+1}^k,\dots,x_{j_r-d+1}^k)-\mbP_{n-h-1,\a,\b}(\a _{k-h-1}^1,x_{j_{2}-d+1}^k,\dots,x_{j_r-d+1}^k)\right).
\eeq*

Now, to compute the probability of the event $\{\a _{k-h-1}^1,x_{j_{2}}^k,\dots,x_{j_r}^k\}$, delete the first $k-h-2$ columns and let $\widetilde{b}=b+k-h-2$, $\widetilde{n}=n-k+h+2$, and $\widetilde{j}=j-k+h+2$. Then,

\beq*
\mbP_{n-h-1,\a,\b}(\a _{k-h-1}^1,x_{j_{2}-d+1}^k,\dots,x_{j_r-d+1}^k)&=&\mbP_{\widetilde{n}-h-1,\a,\widetilde{\b}}(\a _{1}^1,x_{\widetilde{j}_{2}-d+1}^k,\dots,x_{\widetilde{j}_r-d+1}^k)
\eeq*

Adding the weights of the tableaux that contribute to the event $\{\a _{1}^1,x_{\widetilde{j}_{2}-d+1}^k,\dots,x_{\widetilde{j}_r-d+1}^k\}$, we obtain

\[\a\sum_{T\in\mathcal S_{\widetilde{n}-d,\a,\widetilde{\b}}\atop x_{\widetilde{j}_{2}-d}^k,\dots,x_{\widetilde{j}_r-d}^k}wt(T).\]

Dividing by $Z_{\widetilde{n}-h-2}(\a,\widetilde{\b})=(\a\widetilde{\b})(a+\widetilde{b}+\widetilde{n}-h-3)Z_{\widetilde{n}-h-1}(\a,\widetilde{\b})$ gives that probability of this case is,
\[\frac{\widetilde{b}}{(a+\widetilde{b}+\widetilde{n}-h-3)}\mbP_{\widetilde{n}-h-2,\a,\widetilde{\b}}(x_{\widetilde{j}_{2}-d}^k,\dots,x_{\widetilde{j}_r-d}^k).\]

Therefore, the probability of $S$ when $D=h$ is,
\beq*
&=&\frac {1}{(a+b+n-1)_{h+1}}\left(\mbP_{n-h-1}(x_{j_{2}-d+1}^k,\dots,x_{j_r-d+1}^k)-\frac{\widetilde{b}}{(a+\widetilde{b}+\widetilde{n}-h-3)}\mbP_{\widetilde{n}-h-2,\a,\widetilde{\b}}(x_{\widetilde{j}_{2}-d}^k,\dots,x_{\widetilde{j}_r-d}^k)\right)\\
&=&\frac {1}{(a+b+n-1)_{h+1}}\left(\mbP_{n-h-1}(x_{j_{2}-d+1}^k,\dots,x_{j_r-d+1}^k)-\frac{b+k-h-2}{(a+b+n-h-3)}\mbP_{\widetilde{n}-h-2,\a,\widetilde{\b}}(x_{\widetilde{j}_{2}-d}^k,\dots,x_{\widetilde{j}_r-d}^k)\right).
\eeq*

In particular, it follows that 
\be
&&\mbP_n(\b_1,x_{j_2}^k,\dots,x_{j_r}^k, m=1)=\sum_{h=0}^{k-2}C_{k,h} \Big(\frac{1}{(a+b+n-1)_{h+1}}\mbP_{n-h-1}(x_{j_{2}-d+1}^k,\dots,x_{j_r-d+1}^k)\label{bbreakdown}\\
& &\quad-\frac{b+k-h-2}{(a+b+n-1)_{h+1}(a+b+n-h-3)}\mbP_{\widetilde{n}-h-2,\a,\widetilde{\b}}(x_{\widetilde{j}_{2}-d}^k,\dots,x_{\widetilde{j}_r-d}^k)\Big)\nonumber.
\ee

%It follows that,
%
%\beq*
%\mbP_n(x_1,x_{j_2}^k,\dots,x_{j_r}^k)&&\\
%&&=\sum_{h=0}^{k-2}C_{k,h} \Big(\frac{1}{(a+b+n-1)_{h+1}}\mbP_{n-h-1}(x_{j_{2}}^k,\dots,x_{j_r}^k)\\
%&&-\frac{\widetilde{b}}{(a+b+n-1)_{h+1}(a+\widetilde{b}+\widetilde{n}-h-3)}\mbP_{\widetilde{n}-h-2,\a,\widetilde{\b}}(x_{\widetilde{j}_{2}-1}^k,\dots,x_{\widetilde{j}_r-1}^k)\Big)\\ &&+\sum_{h=0}^{k-2}C_{k,h} \frac {b}{(n+a+b-1)_{h+2}}\mbP_{n-h-2,\a,\b}(x_{j_{2}}^k,\dots,x_{j_r}^k).
%\eeq*
%Notice that 
%\[
%\mbP_{\widetilde{n}-h-2,\a,\widetilde{\b}}(x_{\widetilde{j}_{2}-1}^k,\dots,x_{\widetilde{j}_r-1}^k)=\mbP_{n-h-2,\a,\b}(x_{j_{2}}^k,\dots,x_{j_r}^k)
%\]
%Therefore, the above is
%
%  \beq*
%&&=\sum_{h=0}^{k-2}C_{k,h} \Big(\frac{1}{(a+b+n-1)_{h+1}}\mbP_{n-h-1}(x_{j_{2}}^k,\dots,x_{j_r}^k)\\
%&&+\mbP_{n-h-2,\a,\b}(x_{j_{2}}^k,\dots,x_{j_r}^k)\left(\frac {b}{(n+a+b-1)_{h+2}}-\frac{\widetilde{b}}{(a+b+n-1)_{h+1}(a+\widetilde{b}+\widetilde{n}-h-3)}\right)\Big)\\
%&&=\sum_{h=0}^{k-2}C_{k,h} \left(\frac{1}{(a+b+n-1)_{h+1}}\mbP_{n-h-1}(x_{j_{2}}^k,\dots,x_{j_r}^k)+\mbP_{n-h-2,\a,\b}(x_{j_{2}}^k,\dots,x_{j_r}^k)O\left(\frac{1}{(n+a+b)^{h+2}}\right)\right)
%\eeq*
% 
 %%%%%%%%%%%%%%%%%%%%%%%%%%%%%%%%%%%%%%%%%%%%%%%%%%

 \subsection{m$>$1} In this case, we will give an upper bound. Within $\{x^k_1,\dots, x^k_{j_m}\}$, say there are $l$ $\a$'s and $m-l$ $\b$'s for some $l$. Then if $D=h$ the total contribution of weights from this case is less than
\[\a^{2l}\b^{l}\a^{m-l}\b^{m-l}\a^{h}\b^{h}\sum_{T\in\mathcal S_{n-h-2l-(m-l)}\atop x_{j_{m+1}}^k,\dots,x_{j_r}^k}wt(T).\] 
This is an upper bound since the sum on the right-hand-side is less restricted than what we actually want to calculate. In particular, the $\b^k$'s force some empty boxes and a $\b$ below on the first diagonal, but we will consider instead all the extensions of those columns.\\
Dividing by $Z_n(\a,\b)=(\a\b)^{h+m+l}(a+b+n-1)_{h+m+l}Z_{n-h-m-l}(\a,\b)$ gives that probability of this case is
\[\frac {b^l}{(a+b+n-1)_{h+m+l}}\mbP_{n-h-m-l}(x_{j_{m+1}}^k,\dots,x_{j_r}^k).\]
(Notice that when $l=m$, we have equality as this is analogous to the $\a$ case).
Thus,  we would get
\[\mbP_n(x_1^k,x_{j_2}^k,\dots,x_{j_r}^k,D=h)\leq C_{\hk,h} \frac {b^l}{(n+a+b-1)_{h+m+l}}\mbP_{n-h-m-l}(x_{j_{m+1}}^k,\dots,x_{j_r}^k),\]
In particular, it follows that 
\be
\mbP_n(x_1^k,x_{j_2}^k,\dots,x_{j_r}^k, m>1)=O\left(\frac{1}{(n+a+b)^m}\right)\sum_{h=0}^{\hk-m-1}C_{\hk,h} \mbP_{n-h-m-l}(x_{j_{m+1}}^k,\dots,x_{j_r}^k)\label{breakdownm2}.
\ee
\subsection{The asymptotic distribution of symbols on the $k^{th}$ diagonal}
We can now give the asymptotic joint distribution of symbols on the $k$th diagonal. In fact, on the $k$th diagonal, symbols are asymptotically independent. To show this, we will first compute the probability of the event $\{x_{j_1},\dots,x_{j_{r_1+r_2}}\}$, where $0\leq r_1,r_2$ and $\{x_{j_1},\dots,x_{j_{r_1+r_2}}\}$ is a particular sequence of $\a$'s and $\b$'s. More specifically, we will say that there are $r_1$ $\a$'s and $r_2$ $\b$'s using the notation $\a_{j^a_1},\dots,\a_{j^a_{r_1}}$, $1\leq j^a_1<\dots<j^a_{r_1}\leq(n-k+1)$ and $\b_{j^b_1},\dots,\b_{j^b_{r_1}}$, $1\leq j^b_1<\dots<j^b_{r_2}\leq(n-k+1)$, for the ordering of $\a$'s and $\b$'s separately. As before, we will drop the superscript $k$.
 \begin{theorem}
\label{rsymbolsk} For all $1\leq j_1<\dots<j_{r_1+r_2}\leq (n-k+1)$. If,
\[
j_{l+1}-j_l\geq k, \forall l=1,\dots,r_1+r_2-1
\]
then
\beq*
\mbP_n(x_{j_1},\dots, x_{j_{r_1+r_2}})=\left(\prod_{l_1=1}^{r_1}\mbP_n(\a_{j^{a}_{r_1-l_1+1}})\right)\left(\prod_{l_2=1}^{r_2}\mbP_n(\b_{j^{b}_{r_2-l_2+1}})\right)+O\left(\frac{1}{(n+a+b)^{r_1+r_2+1}}\right)\\
=\left(\prod_{l_1=1}^{r_1}\frac{j^{a}_{r_1-l_1+1}+b-1}{(n+a+b-k+1)_2}\right)\left(\prod_{l_2=1}^{r_2}\frac{n-j^{b}_{r_2-l_2+1}-k+a+1}{(n+a+b-k+1)_2})\right)+O\left(\frac{1}{(n+a+b)^{r_1+r_2+1}}\right).
\eeq*
\\
Otherwise, 
\[
\mbP_n(x_{j_1},\dots, x_{j_{r_1+r_2}})=O\left(\frac{1}{(n+a+b)^{r_1+r_2}}\right).
\]
 \end{theorem}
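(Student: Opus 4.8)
The plan is to follow the inductive scheme of Theorem~\ref{rasymbolsk}, peeling the left-most symbol off according to whether it is an $\a$ (using \eqref{abreakdown}) or a $\b$ (using \eqref{bbreakdown}), and running the induction on $r:=r_1+r_2$ directly with the unshifted single-box factors $\mbP_n(\a_{j})$, $\mbP_n(\b_{j})$ of \eqref{boxk}, accepting an $O((n+a+b)^{-(r+1)})$ error at each step. First, exactly as in the proof of Theorem~\ref{rasymbolsk}, we reduce to $j_1=1$: deleting the first $j_1-1$ columns replaces $(n,b)$ by $(n-j_1+1,\,b+j_1-1)$ via \eqref{subtab}, and one checks that this leaves every single-box probability of \eqref{boxk} invariant (after the index shift) and leaves every error $O((n+a+b)^{-t})$ invariant. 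The base case $r=1$ is \eqref{boxk} itself, with the spacing hypothesis vacuous.

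For the inductive step set $N:=n+a+b$ and $m:=\min\{l\ge1:\ j_l\le j_{l+1}-k\}$. If $m>1$ the indices are not $k$-separated; then \eqref{breakdownm2}, together with the induction hypothesis applied (as a crude bound) to the $r-m$ symbols $x_{j_{m+1}},\dots,x_{j_r}$, gives $\mbP_n(x_{j_1},\dots,x_{j_r})=O(N^{-r})$ by the same computation as in Case~\ref{first close} of Theorem~\ref{rasymbolsk}. So assume $m=1$, i.e.\ $j_2-j_1\ge k$. If $x_{j_1}=\a$ we invoke \eqref{abreakdown}: the $h\ge1$ terms carry a factor $O(N^{-h})$ and the crude bound $\mbP_{n-d}(\,\cdot\,)=O(N^{-(r-1)})$ makes their total $O(N^{-(r+1)})$, while the $h=0$ term equals $\tfrac{b}{(N-1)_2}\,\mbP_{n-2}(x_{j_2-2},\dots,x_{j_r-2})$. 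Using $\tfrac{b}{(N-1)_2}=\mbP_n(\a_1)+O(N^{-2})$ and the fact — verified factor by factor — that the single-box probabilities at $(n-2,b)$ with indices $j_l-2$ differ from those at $(n,b)$ with indices $j_l$ by $O(N^{-2})$ each, the induction hypothesis turns this into $\mbP_n(\a_1)\cdot\bigl(\text{claimed product for }x_{j_2},\dots,x_{j_r}\bigr)+O(N^{-(r+1)})$ when the remaining indices are $k$-separated, and into $O(N^{-r})$ otherwise (here one uses $b=O(N)$). Since $\mbP_n(\a_1)=\mbP_n(\a_{j^a_1})$ is exactly the outermost $\a$-factor, this gives the assertion; the sub-case is a transcription of Case~\ref{first far} of Theorem~\ref{rasymbolsk}.

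The genuinely new case is $x_{j_1}=\b$, which uses the two-term identity \eqref{bbreakdown}. The $h\ge1$ terms of both pieces are again $O(N^{-(r+1)})$ (the second piece using $b=O(N)$), so everything comes from $h=0$, where the first piece is $\tfrac1{N-1}\mbP_{n-1,\a,\b}(x_{j_2-1},\dots,x_{j_r-1})$ and the second is $\tfrac{b+k-2}{(N-1)(N-3)}\,\mbP_{n-k,\a,b+k-2}(x_{j_2-k},\dots,x_{j_r-k})$. One checks that the $r-1$ single-box probabilities in the first $\mbP$ (evaluated at $(n-1,b)$ with indices shifted by $1$) and those in the second (evaluated at $(n-k,\,b+k-2)$ with indices shifted by $k$) each differ from the corresponding single-box probability at $(n,b)$ with the original index by $O(N^{-2})$ — indeed the $\b$-factors have exactly the same numerators. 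Writing $P$ for the common induction product of $x_{j_2},\dots,x_{j_r}$ at $(n,b)$, which is $O(N^{-(r-1)})$, the $h=0$ contribution becomes $\tfrac{P}{N-1}-\tfrac{(b+k-2)P}{(N-1)(N-3)}+O(N^{-(r+1)})$; since $N-(b+k-2)-1=n+a-k-1$ and $\tfrac{n+a-k-1}{(N-1)(N-3)}=\mbP_n(\b_1)+O(N^{-2})$, this equals $\mbP_n(\b_1)\,P+O(N^{-(r+1)})$ when the remaining indices are $k$-separated, and $O(N^{-r})$ otherwise. As $\mbP_n(\b_1)=\mbP_n(\b_{j^b_1})$ is the outermost $\b$-factor, the induction closes. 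Substituting \eqref{boxk} into the resulting first form yields the explicit product.

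The step I expect to be the main obstacle is exactly this $x_{j_1}=\b$ computation: \eqref{bbreakdown} yields two probabilities evaluated at \emph{different} tableau sizes and at \emph{different} values of the parameter $b$, and the subtle point is to confirm that their difference reconstitutes precisely $\mbP_n(\b_1)$ times the right product, with all the mismatches ($n-1$ versus $n-k$, $b$ versus $b+k-2$, index shifts $1$ versus $k$, the falling factorials $(N-1)(N-3)$ versus $(N-k+1)(N-k)$) swept into the $O(N^{-(r+1)})$ error. Once one has the bookkeeping fact that every such discrepancy between individual single-box probabilities is $O(N^{-2})$ — so a product of $r-1$ of them shifts by only $O(N^{-r})$, hence by $O(N^{-(r+1)})$ after multiplication by the $O(N^{-1})$ leading factor — everything else is routine, the $\a$ case and the $m>1$ case being essentially copies of the corresponding parts of Theorem~\ref{rasymbolsk}.
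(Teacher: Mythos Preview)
Your proposal is correct and follows essentially the same route as the paper: reduce to $j_1=1$ via \eqref{subtab}, induct on $r_1+r_2$, split on $m=1$ versus $m>1$, and within $m=1$ handle $x_{j_1}=\a$ via \eqref{abreakdown} and $x_{j_1}=\b$ via \eqref{bbreakdown}, with the main contribution in each coming from $h=0$. Your packaging of the denominator adjustments as ``each single-box probability shifts by $O(N^{-2})$'' is exactly the content of the paper's explicit computation \eqref{demo}, and your recombination $\tfrac{1}{N-1}-\tfrac{b+k-2}{(N-1)(N-3)}=\tfrac{n+a-k-1}{(N-1)(N-3)}=\mbP_n(\b_1)+O(N^{-2})$ is precisely what the paper obtains after its parallel bookkeeping in Case~3.2.
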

 
 \begin{proof}
 As in Theorem~\ref{rasymbolsk}, it suffices to prove the theorem for $j_1=1$. The proof will be by induction on $r_1+r_2$. If $r_1+r_2=1$, then $j_1=j^a_1$ or $j_1=j_1^b$ and in either case the conclusion is trivial due to Equation~\eqref{boxk}. \\
 Assume the statement holds for integers up to $r_1+r_2-1$ and consider the following two cases:
 \begin{case}
$m=1$.
\end{case}
This case will be broken up into two possibilities:
 \begin{subcase}\label{fa}
 $j_1=j_1^a$
 \end{subcase}
 Applying Equation~\eqref{abreakdown}, 
 \begin{equation}
\mbP_n(\a_{1},x_{j_2},\dots,x_{j_{r_1+r_2}})=\sum_{h=0}^{k-2}C_{k,h} \frac {b}{(n+a+b-1)_{h+2}}\mbP_{n-d}(x_{j_2-d},\dots,x_{j_{r_1+r_2}-d})\label{parts}.
 \end{equation}
The main contribution is when $h=0$. Since $C_{k,0}=1$, \\
\beq*
\mbP_n(\a_{1},x_{j_2},\dots,x_{j_{r_1+r_2}},h=0)&=&\frac {b}{(n+a+b-1)_{2}}\mbP_{n-2}(x_{j_2-2},\dots,x_{j_{r_1+r_2}-2}).
\eeq*
By the induction hypothesis, if $j_l\leq j_{l+1}-k$ for all $l=1,\dots,r_1+r_2-1$ then this is
\be
 &&=\frac {b}{(n+a+b-1)_2}\left(\prod_{l_1=1}^{r_1-1}\frac{j^{a}_{r_1-l_1-1}+b-3}{(n+a+b-k-1)_2}\prod_{l_2=1}^{r_2}\frac{n-j^{b}_{r_2-l_2+1}-k+a+1}{(n+a+b-k+1)_2}+O\left(\frac{1}{(n+a+b)^{r_1+r_2}}\right)\right)\nonumber\\
 &&=\frac {b}{(n+a+b-1)_2}\prod_{l_1=1}^{r_1-1}\left(\frac{j^{a}_{r_1-l_1-1}+b-1}{(n+a+b-k-1)_2}-\frac{2}{(n+a+b-k-1)_2}\right)\nonumber\\
&&\quad\cdot\prod_{l_2=1}^{r_2}\frac{n-j^{b}_{r_2-l_2+1}-k+a+1}{(n+a+b-k+1)_2}+O\left(\frac{1}{(n+a+b)^{r_1+r_2+1}}\right)\nonumber\\ &&=\frac{b}{(n+a+b-1)_2}\left(\prod_{l_1=1}^{r_1-1}\frac{j^{a}_{r_1-l_1-1}+b-1}{(n+a+b-k-1)_2}+O\left(\frac{1}{(n+a+b)^{r_1}}\right)\right)\nonumber\\
&&\quad\cdot\prod_{l_2=1}^{r_2}\frac{n-j^{b}_{r_2-l_2+1}-k+a+1}{(n+a+b-k+1)_2}+O\left(\frac{1}{(n+a+b)^{r_1+r_2+1}}\right)\nonumber\\ &&=\frac{b}{(n+a+b-1)_2}\prod_{l_1=1}^{r_1-1}\frac{j^{a}_{r_1-l_1-1}+b-1}{(n+a+b-k-1)_2}\prod_{l_2=1}^{r_2}\frac{n-j^{b}_{r_2-l_2+1}-k+a+1}{(n+a+b-k+1)_2}+O\left(\frac{1}{(n+a+b)^{r_1+r_2+1}}\right)\label{PAEq}.
\ee
To complete the proof in this case, the denominators in each fraction above must be adjusted. As a demonstration, we will do so for the first fraction.
\be
\frac{b}{(n+a+b-1)_2}&=&\frac{b}{(n+a+b-k+1)_2}\frac{(n+a+b-k+1)_2}{(n+a+b-1)_2}\nonumber\\
&=&\frac{b}{(n+a+b-k+1)_2}\left(1+\frac{-k+2}{n+a+b-1}\right)\left(1+\frac{-k+2}{n+a+b-2}\right)\nonumber\\
&=&\frac {b}{(n+a+b-k+1)_2}+O\left(\frac{1}{(n+a+b)^2}\right)\label{demo}.
\ee
Therefore, returning to \eqref{PAEq},
\beq*
&=&\left(\frac {b}{(n+a+b-k+1)_2}+O\left(\frac{1}{(n+a+b)^2}\right)\right)\prod_{l_1=1}^{r_1-1}\left(\frac{j^{a}_{r_1-l_1-1}+b-1}{(n+a+b-k+1)_2}+O\left(\frac{1}{(n+a+b)^{r_1}}\right)\right)\\
&&\cdot\prod_{l_2=1}^{r_2}\left(\frac{n-j^{b}_{r_2-l_2+1}-k+a+1}{(n+a+b-k+1)_2}+O\left(\frac{1}{(n+a+b)^{r_2}}\right)\right)+O\left(\frac{1}{(n+a+b)^{r_1+r_2+1}}\right)\\
&=&\left(\prod_{l_1=1}^{r_1}\frac{j^{a}_{r_1-l_1+1}+b-1}{(n+a+b-k+1)_2}\right)\left(\prod_{l_2=1}^{r_2}\frac{n-j^{b}_{r_2-l_2+1}-k+a+1}{(n+a+b-k+1)_2})\right)+O\left(\frac{1}{(n+a+b)^{r_1+r_2+1}}\right).
\eeq*

On the other hand, if for some  $2\leq l\leq r-1, j_l> j_{l+1}-k$ then by the induction hypothesis again, this is
\beq*
 &=&\frac {b}{(n+a+b-1)_2}\cdot O\left(\frac{1}{(n+a+b)^{r-1}}\right).
\eeq*

Now returning to Equation~\eqref{parts}, when $h>0$, by the induction hypothesis,
\beq*
\mbP_n(\a_{1},x_{j_2},\dots,x_{j_{r_1+r_2}},h>0)&=&\sum_{h=1}^{k-2}C_{k,h} \frac {b}{(n+a+b-1)_{h+2}}O\left(\frac{1}{(n+a+b)^{r_1+r_2-1}}\right)\\
&=&O\left(\frac{1}{(n+a+b)^{r_1+r_2+1}}\right).
\eeq*

  \begin{subcase}
  $j_1=j^b_1$.
 \end{subcase}
 Applying Equation~\eqref{bbreakdown}, $\mbP_n(\b_1,\dots, x_{j_{r_1+r_2}})$ can be expressed as
\be
&&\sum_{h=0}^{k-2}C_{k,h} \Big(\frac{1}{(a+b+n-1)_{h+1}}\mbP_{n-d+1}(x_{j_2-d+1},\dots,x_{j_r-d+1})\nonumber\\
&&\quad-\frac{b+k-h-2}{(a+b+n-1)_{h+1}(a+b+n-h-3)}\mbP_{\widetilde{n}-d,\a,\widetilde{\b}}(x_{j_2-d},\dots,x_{j_r-d})\Big)\label{breakb}.
\ee
The main contribution is when $h=0$, \\
\be
\mbP_n(\b_1,\dots, x_{j_{r_1+r_2}},h=0)&=&\frac{1}{(a+b+n-1)}\mbP_{n-1}(x_{j_2-1},\dots,x_{j_r-1})\nonumber\\
&&-\frac{b+k-2}{(a+b+n-1)(a+b+n-3)}\mbP_{\widetilde{n}-2,\a,\widetilde{\b}}(x_{\widetilde{j}_2-2},\dots,x_{\widetilde{j}_r-2})\Big)\label{hiszero}.
\ee
By the induction hypothesis, if $j_l\leq j_{l+1}-k$ for all $l=1,\dots,r_1+r_2-1$ then this is
\beq*
&=&\frac{1}{(a+b+n-1)}\prod_{l_1=1}^{r_1}\frac{j^{a}_{r_1-l_1+1}+b-2}{(n+a+b-k)_2}\prod_{l_2=1}^{r_2-1}\frac{n-j^{b}_{r_2-l_2+1}-k+a}{(n+a+b-k)_2}+O\left(\frac{1}{(n+a+b)^{r_1+r_2+1}}\right)\nonumber\\
&&-\frac{b+k-2}{(a+b+n-1)(a+b+n-3)}
\prod_{l_1=1}^{r_1}\frac{j^{a}_{r_1-l_1+1}+b-3}{(n+a+b-k-1)_2}\prod_{l_2=1}^{r_2-1}\frac{n-j^{b}_{r_2-l_2+1}-k+a-1}{(n+a+b-k-1)_2}\\
&=&\frac{1}{(a+b+n-1)}\prod_{l_1=1}^{r_1}\frac{j^{a}_{r_1-l_1+1}+b-2}{(n+a+b-k)_2}\prod_{l_2=1}^{r_2-1}\frac{n-j^{b}_{r_2-l_2+1}-k+a}{(n+a+b-k)_2}+O\left(\frac{1}{(n+a+b)^{r_1+r_2+1}}\right)\nonumber\\
&&-\frac{b+k-2}{(a+b+n-1)(a+b+n-3)}
\prod_{l_1=1}^{r_1}\frac{j^{a}_{r_1-l_1+1}+b-3}{(n+a+b-k-1)_2}\prod_{l_2=1}^{r_2-1}\frac{n-j^{b}_{r_2-l_2+1}-k+a-1}{(n+a+b-k-1)_2}.
\eeq*

 %Expanding the right--hand--side,
%\beq*
%&=& \frac{1}{a+b+n-1}\mbP_{n-1}(\a_{j_{\a_1}},\dots, \a_{j_{\a_s}}, \b_{j_{\b_2}},\dots,\b_{j_{\b_{r-s}}})\nonumber\\
%&&-\frac{b+k-2}{(a+b+n-1)(a+b+n-3)}\mbP_{\widetilde{n}-2,\a,\widetilde{\b}}((\a_{j_{\a_1}},\dots, \a_{j_{\a_s}}, \b_{j_{\b_2}},\dots,\b_{j_{\b_{r-s}}})\\
%&&+O\left(\frac{1}{(n+a+b)^2}\right)\sum_{h=1}^{k-2}C_{k,h} \Big(\mbP_{n-h-1}(\a_{j_{\a_1}},\dots, \a_{j_{\a_s}}, \b_{j_{\b_2}},\dots,\b_{j_{\b_{r-s}}})\nonumber\\
%&&-\mbP_{\widetilde{n}-h-2,\a,\widetilde{\b}}(\a_{j_{\a_1}},\dots, \a_{j_{\a_s}}, \b_{j_{\b_2}},\dots,\b_{j_{\b_{r-s}}})\Big)
 %\eeq* 
%So then by the induction hypothesis, if $j_l\leq j_{l+1}-k$ for all $l=1,\dots,r-1$ then this is
  %\beq*
 %&=& \frac{1}{a+b+n-1}\left(\left(\prod_{l=1}^s\frac{j_{\a_l}+b-2}{(n+a+b-k)_2}\right)\left(\prod_{l=2}^{r-s}\frac{n-j_{\b_l}-k+a+1}{(n+a+b-k)_2})\right)+O\left(\frac{1}{(n+a+b)^{r+1}}\right)\right)\nonumber\\
%&&-\frac{b+k-2}{(a+b+n-1)(a+b+n-3)}\Big(\left(\prod_{l=1}^s\frac{j_{\a_l}+b-k+h-1}{(n+a+b-k-1)_2}\right)\left(\prod_{l=2}^{r-s}\frac{n-j_{\b_l}-k+a+1}{(n+a+b-k-1)_2})\right)\\
%&&+O\left(\frac{1}{(n+a+b)^{r+1}}\right)\Big)+O\left(\frac{1}{(n+a+b)^{r+1}}\right)
 %\eeq*
 
As in Case~\ref{fa}, we adjust the denominators as demonstrated in Equation~\eqref{demo}. In doing so, we obtain the following,

\beq*
&&\frac{1}{(a+b+n-1)}\left(\prod_{l_1=1}^{r_1}\frac{j^{a}_{r_1-l_1+1}+b-1}{(n+a+b-k+1)_2}+O\left(\frac{1}{(n+a+b)^{r_1}}\right)\right)\\
&&\cdot\left(\prod_{l_2=1}^{r_2-1}\frac{n-j^{b}_{r_2-l_2+1}-k+a+1}{(n+a+b-k+1)_2}+O\left(\frac{1}{(n+a+b)^{r_2-1}}\right)\right)+O\left(\frac{1}{(n+a+b)^{r_1+r_2+1}}\right)\\
&&-\frac{b+k-2}{(a+b+n-1)(a+b+n-3)}
\left(\prod_{l_1=1}^{r_1}\frac{j^{a}_{r_1-l_1+1}+b-1}{(n+a+b-k+1)_2}+O\left(\frac{1}{(n+a+b)^{r_1}}\right)\right)\\
&&\cdot\left(\prod_{l_2=1}^{r_2-1}\frac{n-j^{b}_{r_2-l_2+1}-k+a+1}{(n+a+b-k+1)_2}+O\left(\frac{1}{(n+a+b)^{r_2-1}}\right)\right)\\
&=& \frac{n+a-k-1}{(a+b+n-1)(a+b+n-3)}\left(\prod_{l_1=1}^{r_1}\frac{j^{a}_{r_1-l_1+1}+b-1}{(n+a+b-k+1)_2}+O\left(\frac{1}{(n+a+b)^{r_1}}\right)\right)\\
&&\cdot\left(\prod_{l_2=1}^{r_2-1}\frac{n-j^{b}_{r_2-l_2+1}-k+a+1}{(n+a+b-k+1)_2}+O\left(\frac{1}{(n+a+b)^{r_2-1}}\right)\right)+O\left(\frac{1}{(n+a+b)^{r_1+r_2+1}}\right)\\
&=&\prod_{l_1=1}^{r_1}\frac{j^{a}_{r_1-l_1+1}+b-1}{(n+a+b-k+1)_2}\prod_{l_2=1}^{r_2}\frac{n-j^{b}_{r_2-l_2+1}-k+a+1}{(n+a+b-k+1)_2}+O\left(\frac{1}{(n+a+b)^{r_1+r_2+1}}\right).
\eeq*

   %\beq*
 %&=& \frac{n+a-k-1}{(a+b+n-1)(a+b+n-3)}\left(\left(\prod_{l=1}^s\frac{j_{\a_l}+b-1}{(n+a+b-k+1)_2}\right)\Big(\prod_{l=2}^{r-s}\frac{n-j_{\b_l}-k+a+1}{(n+a+b-k+1)_2})\right)\\
 %&&+O\left(\frac{1}{(n+a+b)^{r+1}}\right)\Big)
 %\eeq*
 %Similarly,
 %\beq*
 %&=& \left(\prod_{l=1}^s\frac{j_{\a_l}+b-1}{(n+a+b-k+1)_2}\right)\left(\prod_{l=1}^{r-s}\frac{n-j_{\b_l}-k+a+1}{(n+a+b-k+1)_2})\right)+O\left(\frac{1}{(n+a+b)^{r+1}}\right)
 %\eeq*
 
 On the other hand, if for some  $2\leq l\leq r-1, j_l> j_{l+1}-k$ then by the induction hypothesis again, Equation~\eqref{hiszero} becomes
 \beq*
&=& \frac{1}{a+b+n-1}O\left(\frac{1}{(n+a+b)^{r-1}}\right)-\frac{b+k-2}{(a+b+n-1)(a+b+n-3)}O\left(\frac{1}{(n+a+b)^{r-1}}\right)\nonumber\\
&=&O\left(\frac{1}{(n+a+b)^r}\right).
 \eeq*
 Now returning to Equation~\eqref{breakb}, when $h>0$, by the induction hypothesis,
\beq*
\mbP_n(\b_{1},x_{j_2},\dots,x_{j_{r_1+r_2}},h>0)&=&\sum_{h=1}^{k-2}C_{k,h}\Big(\frac{1}{(a+b+n-1)_{h+1}}O\left(\frac{1}{(n+a+b)^{r_1+r_2-1}}\right)\nonumber\\
&&-\frac{b+k-h-2}{(a+b+n-1)_{h+1}(a+b+n-h-3)}O\left(\frac{1}{(n+a+b)^{r_1+r_2-1}}\right)\Big)\\
&=&O\left(\frac{1}{(n+a+b)^{r_1+r_2+1}}\right).
\eeq*

\begin{case}
$m>1$.
\end{case}
By Equation~\eqref{breakdownm2},

\beq*
\mbP_n(x_{j_1},\dots, x_{j_{r_1+r_2}})&=& O\left(\frac{1}{(n+a+b)^m}\right)\sum_{h=0}^{\hk-m-1}C_{\hk,h} \mbP_{n-h-m-l}(x_{j_{m+1}}^k,\dots,x_{j_{r_1+r_2}}^k).
\eeq*

By the induction hypothesis,
\beq*
&=& O\left(\frac{1}{(n+a+b)^m}\right)\sum_{h=0}^{\hk-m-1}C_{\hk,h} O\left(\frac{1}{(n+a+b)^{r_1+r_2-m}}\right)\\
&=& O\left(\frac{1}{(n+a+b)^{r_1+r_2}}\right).
\eeq*

 \end{proof}
 
% \begin{lemma}
%\label{LA2}
%For any $m=O(n)$, 
%\[J_{r,m}:=\{ 1 \leq j_{1} < ... < j_{r} \leq m:\ j_{l} \leq j_{l + 1}
%- k, \hspace{2mm} \forall l = 1, 2, ..., r - 1\}.\] 
%Then 
%\[\sum_{J_{r, m}} \left(\prod_{l=1}^{r} \frac{n-j_{r-k+1}}{n^r}\right)
%\approx\frac{1}{2^{r}r!}.\]
%\end{lemma}
%\begin{proof}
%By induction on $r$. When $r=1$:
%\beq*
%\sum_{J_{1, m}} \frac{n-j_1}{n^2}&=&\frac{1}{n^2}\sum_{l=n-m+k-2}^{n-1}j_1\\
%&=&\frac{1}{n^2}\left(\sum_{l=1}^{n-1}j_1-\sum_{l=1}^{k-1}j_1\right)\\
%\eeq*
%By Lemma~\ref{LA},
%\beq*
%&=&\frac{1}{n^2}\left(\frac{(n)_2}{2}-\frac{(k)_2}{2}\right)\\
%&\approx&\frac{1}{2}
%\eeq*
%
%Assume the statement holds for $r-1$. Then,
%\beq*
%\sum_{J_{r, n-k+1}} \left(\prod_{l=1}^{r} \frac{n-j_{r-k+1}}{n^2}\right)&=&
%\eeq*
%
%\end{proof}
We can now give the asymptotic distribution of the number of symbols on the kth diagonal. As before, define $A_{n}$ and $B_{n}$ to be the number of $\alpha$'s and $\beta$'s respectively on the $k^{th}$ diagonal, 
i.e. 
$A_{n} := \sum^{n-k+2}_{j=1} I_{\alpha_j}$ and
$B_{n} := \sum^{n-k+2}_{j=1} I_{\beta_j}$. 
Also, let
\beq*
J=\{1\leq j_1^a<\dots<j_{r_1}^a\leq(n-k+1), 1\leq j_1^b<\dots<j_{r_2}^b\leq (n-k+1):\\
\forall l=1,\dots,r-1; j_{l+1}-j_l\geq k \}
\eeq*
and
\beq*
J^c=\{1\leq j_1^a<\dots<j_{r_1}^a\leq(n-k+1), 1\leq j_1^b<\dots<j_{r_2}^b\leq (n-k+1):\\
\exists l=1,\dots,r-1; j_{l+1}-j_l< k \}
\eeq*
Then we have the following,
\begin{theorem}\label{symbol_on_kth}
Let $Pois(\lambda)$ be a Poisson random variable with parameter
$\lambda$. Then, as $n\to\infty$,
\begin{equation}
\left(A_{n}, B_n\right) \stackrel{d}{\rightarrow} \left(Z_1,Z_2\right).
\end{equation}
where $Z_i\in Pois \left( \frac{1}{2} \right)$ are independent random variables.
\end{theorem}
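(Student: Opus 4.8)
The plan is to establish convergence of the joint factorial moments of $(A_n,B_n)$ and then apply the multivariate method of moments; the marginal statements are already Theorem~\ref{alpha_on_kth} and Corollary~\ref{beta_on_kth}, so the new content to be extracted is the asymptotic independence, reflected in a factorization of the limiting moments. Writing $A_n=\sum_j I_{\a_j}$ and $B_n=\sum_j I_{\b_j}$ as sums over the $n-k+1$ columns of the $k$th diagonal (we take $k>1$, so the diagonal boxes need not all be filled), the bivariate version of Lemma~\ref{fac_mom} gives, for all integers $r_1,r_2\ge0$,
\[
\mathbb{E}\big[(A_n)_{r_1}(B_n)_{r_2}\big]=r_1!\,r_2!\sum\mbP\big(\a_{j_1^a},\dots,\a_{j_{r_1}^a},\b_{j_1^b},\dots,\b_{j_{r_2}^b}\big),
\]
the sum running over $1\le j_1^a<\dots<j_{r_1}^a\le n-k+1$ and $1\le j_1^b<\dots<j_{r_2}^b\le n-k+1$ (terms in which an $\a$-column equals a $\b$-column vanish automatically, since a box carries at most one symbol). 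A pair $(Z_1,Z_2)$ of independent $Pois(1/2)$ variables has $\mathbb{E}[(Z_1)_{r_1}(Z_2)_{r_2}]=2^{-(r_1+r_2)}$ and is determined by its joint factorial moments — equivalently, $\mathbb{E}[(1+s)^{Z_1}(1+t)^{Z_2}]=e^{(s+t)/2}$ is analytic near the origin — so it suffices to prove $\mathbb{E}[(A_n)_{r_1}(B_n)_{r_2}]\to 2^{-(r_1+r_2)}$ for every $r_1,r_2$; then $(A_n,B_n)\stackrel{d}{\rightarrow}(Z_1,Z_2)$ follows from the two-dimensional version of \cite[Theorem~22, Chapter~1]{B}.

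To evaluate the sum I would split it as $\sum_J+\sum_{J^c}$, with $J$ and $J^c$ as defined just before the statement. On $J^c$, Theorem~\ref{rsymbolsk} bounds each term by $O\big((n+a+b)^{-(r_1+r_2)}\big)$ and $|J^c|=O\big(\binom{n}{r_1+r_2-1}\big)$, so $\sum_{J^c}=O(1/n)\to0$. On $J$, Theorem~\ref{rsymbolsk} factors the probability as $\prod_{l_1}\mbP_n(\a_{j_{r_1-l_1+1}^a})\prod_{l_2}\mbP_n(\b_{j_{r_2-l_2+1}^b})+O\big((n+a+b)^{-(r_1+r_2+1)}\big)$, and the remainder, summed over the $O(n^{r_1+r_2})$ tuples in $J$, contributes $O(1/n)$. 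I would then enlarge the index set from $J$ (all $r_1+r_2$ columns pairwise at distance $\ge k$) to $J_a\times J_b$, where the $\a$-columns are pairwise $\ge k$ apart and, independently, the $\b$-columns are pairwise $\ge k$ apart; the discarded configurations have some $\a$-column within $k$ of some $\b$-column, hence number $O(n^{r_1+r_2-1})$, each of size $O(n^{-(r_1+r_2)})$, for an error of $O(1/n)$. The main term then decouples into the product of an $\a$-sum $\sum_{J_a}\prod_{l_1}\mbP_n(\a_{j_{l_1}^a})$ and a $\b$-sum $\sum_{J_b}\prod_{l_2}\mbP_n(\b_{j_{l_2}^b})$.

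The $\a$-sum tends to $2^{-r_1}/r_1!$ exactly as in the proof of Theorem~\ref{alpha_on_kth} (approximate $\mbP_n(\a_j)\approx j/n^2$, enlarge $J_a$ to the set $J_{r_1,n-k+1}$ of Lemma~\ref{LA} at a cost of $O(1/n)$, then apply that lemma). The $\b$-sum tends to $2^{-r_2}/r_2!$ by the same computation combined with the involution of Section~\ref{intro}; concretely, the order-reversing, gap-preserving reflection $j\mapsto n-k+2-j$ of $\{1,\dots,n-k+1\}$ interchanges the roles of $a$ and $b$ in \eqref{boxk}, so that $\mbP_n(\b_j)$ takes precisely the form of the $\a$-probability with $a$ and $b$ swapped, and the $\a$-argument applies verbatim. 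Combining, the $J$-sum tends to $2^{-(r_1+r_2)}/(r_1!\,r_2!)$, hence $\mathbb{E}[(A_n)_{r_1}(B_n)_{r_2}]\to 2^{-(r_1+r_2)}$, which completes the argument.

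The only ingredients not already available are the multivariate moment criterion — which I would justify through the joint generating function $\mathbb{E}[(1+s)^{A_n}(1+t)^{B_n}]=\sum_{r_1,r_2\ge0}\frac{s^{r_1}t^{r_2}}{r_1!\,r_2!}\mathbb{E}[(A_n)_{r_1}(B_n)_{r_2}]\to e^{(s+t)/2}$ together with a routine analyticity/uniqueness argument — and the bookkeeping needed to pass from the global spacing constraint $J$ to the decoupled constraint $J_a\times J_b$. I expect the latter, namely verifying that ``cross-type'' near-collisions (an $\a$-column close to a $\b$-column) form a negligible set so that the $\a$- and $\b$-sums genuinely factorize, to be the main point requiring care; everything else is a direct adaptation of the single-symbol computations already carried out in Theorems~\ref{rasymbolsk} and~\ref{alpha_on_kth}.
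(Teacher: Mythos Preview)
Your proposal is correct and follows essentially the same route as the paper: joint factorial moments via the bivariate version of Lemma~\ref{fac_mom}, the split $\sum_J+\sum_{J^c}$, Theorem~\ref{rsymbolsk} for both the main term and the error, decoupling into an $\a$-sum and a $\b$-sum, and evaluation via Lemma~\ref{LA} together with the reflection $j\mapsto n-k+2-j$ for the $\b$-part. If anything, you are more explicit than the paper about the passage from $J$ to $J_a\times J_b$ (the paper jumps directly to the product of two $J_{r_i,\cdot}$ sums), which is indeed the one bookkeeping step that deserves care.
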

\begin{proof}
By \cite[Theorem~23, Chapter~6]{B} it suffices to show that the joint
factorial moments of 
$(A_{n}, B_n)$ 
satisfy, for all $r_1,r_2\geq 0$,
\begin{equation}
\mathbb{E}[(A_{n})_{r_1}(B_{n})_{r_2}]\to\left(\frac12\right)^{r_1}\left(\frac12\right)^{r_2}\hspace{5mm}\mbox{as\ }n\to\infty.
\end{equation}

We have
\begin{eqnarray*} 
z_1^{A_n}z_2^{B_n} &=& z_1^{\sum^{n-k+2}_{j=1} I_{\alpha_j}}z_2^{\sum^{n-k+2}_{j=1} I_{\beta_j}} = \prod^{n-k+2}_{j=1}z_1^{I_{\alpha_j}}\prod^{n-k+2}_{j=1} z_2^{I_{\b_j}}=\prod^{n-k+2}_{j=1}(1 + (z_1-1))^{I_{\alpha_j}}\prod^{n-k+2}_{j=1} (1 + (z_2-1))^{I_{\b_j}} \\&=&
\prod^{n-k+2}_{j=1}(1 + I_{\alpha_j}(z_1-1))\prod^{n-k+2}_{j=1} (1 + I_{\b_j}(z_2-1))\\&=&
1 + \sum^{n-k+2}_{r_1=1} \sum^{n-k+2}_{r_2=1}\left( \sum_{J\cup J^{C}} \left( \prod^{r_1}_{l=1}I_{\alpha_j}\prod^{r_2}_{l=1} I_{\b_j}\right) \right) (z_1 - 1)^{r_1} (z_2 - 1)^{r_2}\\&=& 1 + \sum^{n-k+2}_{r_1=1} \sum^{n-k+2}_{r_2=1}(z_1 - 1)^{r_1} (z_2 - 1)^{r_2}\left( \sum_{J\cup J^{C}} \left( \prod^{r_1}_{l=1}I_{\alpha_j}\prod^{r_2}_{l=1} I_{\b_j}\right) \right).
\end{eqnarray*}

Thus, 
\beq*
\mathbb{E}(z_1^{A_n}z_2^{B_n})&=& 1 + \sum^{n-k+2}_{r_1=1} \sum^{n-k+2}_{r_2=1}(z_1 - 1)^{r_2}(z_1 - 1)^{r_2}\left( \sum_{J\cup J^{C}} \mbP_n(x_{j_1},\dots,x_{j_{r_1+r_2}} ) \right),
\eeq*

and,

\be
\mathbb{E}[(A_n)_r(B_n)_r]&=& \frac{d^{r_1}}{dz_1^{r_1}}\frac{d^{r_2}}{dz_2^{r_2}}(\mathbb{E}(z^{A_nB_n}))|_{z_1=z_2=1}=r_1!r_2!\left( \sum_{J\cup J^{C}} \mbP_n(x_{j_1},\dots,x_{j_{r_1+r_2}} ) \right)\label{ALLT}.
\ee

To compute the sum on the right-hand-side, first sum over $J$. By Theorem \ref{rsymbolsk},
\beq*
&=& \left( \sum_{J} \left( \left(\prod_{l=1}^{r_1}\frac{j_{\a_l}+b-1}{(n+a+b-k+1)_2}\right)\left(\prod_{l=1}^{r_2}\frac{n-j_{\b_l}-k+a+1}{(n+a+b-k+1)_2})\right)+O\left(\frac{1}{(n+a+b)^{r+1}}\right)\right)\right) \\
&\approx&\left( \sum_{J} \left( \left(\prod_{l=1}^{r_1}\frac{j_{\a_l}}{n^2}\right)\left(\prod_{l=1}^{r_2}\frac{n-j_{\b_l}}{n^2}\right)+O\left(\frac{1}{(n+a+b)^{r+1}}\right)\right)\right)\\
&=&\left( \left(\sum_{J_{r_1, n-k+2}}\prod_{l=1}^{r_1}\frac{j_{\a_l}}{n^2}\right)\left(\sum_{J_{r_2, n-k+2}}\prod_{l=1}^{r_2}\frac{n-j_{\b_l}}{n^2}\right)+O\left(\frac{1}{(n+a+b)^{r+1}}\right)\right)\\
&=&\left( \left(\sum_{J_{r_1, n-k+2}}\prod_{l=1}^{r_1}\frac{j_{\a_l}}{n^2}\right)\left(\sum_{k-1\leq j_1^b<\dots<j_{r_2}^b\leq n-1}\prod_{l=1}^{r_2}\frac{j_{\b_l}}{n^2}\right)+O\left(\frac{1}{(n+a+b)^{r+1}}\right)\right)\\
&=&\left( \left(\sum_{J_{r_1, n-k+2}}\prod_{l=1}^{r_1}\frac{j_{\a_l}}{n^2}\right)\left(\sum_{1\leq j_1^b<\dots<j_{r_2}^b\leq n-1}\prod_{l=1}^{r_2}\frac{j_{\b_l}}{n^2}-\sum_{1\leq j_1^b<\dots<j_{r_2}^b\leq k-1}\prod_{l=1}^{r_2}\frac{j_{\b_l}}{n^2}\right)+O\left(\frac{1}{(n+a+b)^{r+1}}\right)\right).\\
\eeq*
By Lemma~\ref{LA},
\beq*
&=& \left(\left(\frac{1}{n^{2r_1}}\frac{(n-k)_{2r_1}}{2^{r_1}r_1!}\right)\left(\frac{1}{n^{2r_2}}\frac{(n)_{2r_2}}{2^{r_2}r_2!}-\frac{1}{n^{2r_2}}\frac{(k)_{2r_2}}{2^{r_2} r_1!}\right)+O\left(\frac{1}{(n+a+b)^{r+1}}\right)\right)\\
&\approx& \left(\left(\frac{1}{2^{r_1}r_1!}\right)\left(\frac{1}{2^{r_2}r_2!}\right)+O\left(\frac{1}{(n+a+b)^{r+1}}\right)\right)\\
&=& \frac{1}{2^{r_1+r_2}r_1!r_2!}+O\left(\frac{1}{(n+a+b)^{r+1}}\right)
\eeq*

If $(j_1,\dots,j_{r_1+r_2})\in J^c$, then there exists an $l$ such that $j_{l+1}-j_l<k$ and thus this set has $O\left(\binom{n-k+1}{r_1+r_2-1}\right)$ elements. Therefore, summing over $J^{c}$, 

\beq*
\left( \sum_{J^{C}} \mbP_n(x_{j_{1}},\dots, x_{j_{r_1+r_2}}) \right)=O\left(\binom{n-k+1}{r_1+r_2-1}\frac{1}{n^{r_1+r_2}}\right)=O\left(\frac{1}{n}\right).
\eeq*

Returning to Equation~\eqref{ALLT} and letting $n\rightarrow \infty$ completes the proof.
\end{proof}

\end{document}